\def\quot#1#2{#1/\!\!/#2}
\def\gr{\operatorname{gr}}
\def\C{\mathbb {C}}
\def\R{\mathbb {R}}
\def\NN{\mathcal N}
\def\OO{\mathcal O}
\def\Z{\mathbb Z}
\def\M{\mathcal M}
\def\SL{\operatorname{SL}}
\def\GL{\operatorname{GL}}
\def\U{\operatorname{U}}
\def\SU{\operatorname{SU}}
\def\Sp{\operatorname{Sp}}
\def\SO{\operatorname{SO}}
\def\Orth{\operatorname{O}}
\def\Spin{\operatorname{Spin}}
\def\inv{^{-1}}
\def\lie#1{{\mathfrak #1}}
\def\lieg{\lie g}
\def\liek{\lie k}
\def\liem{\lie m}
\def\phi{{\varphi}}
\def\GG{\mathsf{G}}
\def\pr{{\operatorname{pr}}}
\def\codim{{\operatorname{codim}}}
\def\rk{\operatorname{rk}}
\def\J{\mathcal J}
\def\diff{\mathcal D}
\newcommand {\Spec} {\operatorname{Spec}}
\numberwithin{equation}{subsection}
\newtheorem{theorem}[subsection]{Theorem}
\newtheorem{lemma}[subsection]{Lemma}
\newtheorem{proposition}[subsection]{Proposition}
\newtheorem{corollary}[subsection]{Corollary}
\theoremstyle{definition}
\theoremstyle{remark}
\newtheorem{remark}[subsection]{Remark}
\newtheorem{example}[subsection]{Example}
\title{The Koszul complex of a moment map}
\author{Hans-Christian Herbig}
\address{Institut for Matematiske Fag\\
Aarhus Universitet\\
DK-8000 Aarhus C\\
Denmark}
\email{herbig@imf.au.dk}  
\author{Gerald W. Schwarz}
\address{Department of Mathematics\\
Brandeis University\\
Waltham, MA 02454-9110\\
USA}
\email{schwarz@brandeis.edu}
\keywords{moment map, Koszul complex}
\subjclass[2010] {Primary 20G20, 53D20, 57S15; Secondary 13D02}
\begin{document}
\begin{abstract}
Let $K\to\U(V)$ be a unitary representation of the compact Lie group $K$. Then there is a canonical moment mapping $\rho\colon V\to\liek^*$. We have the Koszul complex ${\mathcal K}(\rho,\mathcal C^\infty(V))$ of the component functions $\rho_1,\dots,\rho_k$  of $\rho$. Let $G=K_\C$, the complexification of $K$. We show that the Koszul complex is a resolution of the smooth functions on $\rho\inv(0)$ if and only if $G\to\GL(V)$ is $1$-large, a concept introduced in \cite{JAlg94,LiftingDO}. Now let $M$ be a symplectic manifold with a Hamiltonian action of  $K$. Let $\rho$ be a moment mapping and consider the Koszul complex given by the component functions of $\rho$. We show that the Koszul complex is a resolution of the smooth functions on $Z=\rho\inv(0)$ if and only if the complexification of each symplectic slice representation at a point of $Z$ is $1$-large.  
\end{abstract}

\maketitle

\section{Introduction}

This work is motivated by the Batalin-Fradkin-Vilkovisky (BFV) approach towards symplectic reduction of constrained systems and their quantization (cf.\ \cite{Stasheff92} and references therein). In its simplest incarnation, the BFV-method deals with symplectic reduction of moment maps at the zero level. The BFV-method is based on the Koszul complex of the moment map. If this complex is a resolution of the algebra of smooth functions on the zero fiber of the moment map, then the BFV-construction can be used to describe the Poisson algebra of smooth functions on the symplectic quotient. The BFV-algebra provides a natural starting point for attempts to quantize the symplectic quotient. For example, along these lines it has been proven that the symplectic quotient with respect to a compact connected Lie group admits a continuous formal deformation quantization, provided that the Koszul complex fulfills the aforementioned conditions \cite{BHP07}.
 
In \cite{HIP09} there is a  systematic examination of the Koszul complex of moment maps of unitary representations of tori. It turns out that exactness of the Koszul complex in homological degree $i\ge 1$  translates into a rank condition on the weight matrix. In order that the zeroth homology gives the algebra of smooth function on the zero fiber, the moment map has to satisfy a sign change condition.

Let $K$ be a compact Lie group with a Hamiltonian action on the symplectic manifold $M$ 
with moment mapping $\rho\colon M\to\liek^*$. For a closed subset $Z$  of $M$, we define $\mathcal C^\infty(Z)$, the
\emph{smooth functions on $Z$\/}, to be the restrictions  to $Z$ of the
smooth functions on $M$. We want to determine when the Koszul complex
$\mathcal K(\rho,\mathcal C^\infty(M))$ on the components  of $\rho$ is a
resolution of $\mathcal C^\infty( \rho\inv(0))$. This question is local, and as in \cite{AGJ90}, we use the symplectic slice theorem to reduce to the case that $M$ is a unitary representation $V$ of $K$ with the standard moment mapping $\rho$.     
We show that  $\mathcal K(\rho,\mathcal C^\infty(V))$ is a resolution of $\mathcal C^\infty(\rho\inv(0))$ if and only if the action of the complexification $G=K_\C$ on $V$ is $1$-large (see \S \ref{sec:reductive}). This condition was important in the second author's work on lifting of differential operators from the quotient $\quot VG$ to $V$. 

Here is an outline of the paper. In \S \ref{sec:reductive} we consider properties of the moment mapping $\mu$ associated to a representation $G\to\GL(V)$. This is  just the mapping $\mu\colon V\oplus V^*\to\lieg^*$ such that $\mu(v,v^*)(A)= v^*(A(v))$ for $(v,v^*)\in V\oplus V^*$ and $A\in\lieg$. We study the subscheme $Y$ of $V\oplus V^*$ which is defined by setting all the component functions of $\mu$ equal to zero. We recall results of Avramov\cite{Avramov}, Panyushev \cite{Panyushev} and the second author \cite{JAlg94, LiftingDO} which relate properties of $\OO(Y)$ to the geometry of the $G$-action on $V$. In \S \ref{sec:1large} we show that there are lots of $G$-modules $V$ which are $1$-large and give lists of some infinite families. In \S \ref{sec:unitarycase}
we consider unitary actions $K\to\U(V)$ of compact Lie groups $K$ and we show that the relevant Koszul complex $\mathcal K(\rho,\mathcal C^\infty(V))$ is a resolution of $\mathcal C^\infty(\rho\inv(0))$ if and only if $(V,K_\C)$ is $1$-large. In \S \ref{sec:example} we consider an example of \cite{AGJ90} in our language, showing what can go wrong when $1$-largeness fails. In \S \ref{sec:sympslice} we consider a Hamiltonian action $K\times M\to M$ of a compact Lie group $K$ with   moment mapping $\rho$ and recall the symplectic slice theorem.  In \S \ref{sec:manifolds} we combine the slice theorem and the results of the previous sections to give our necessary and sufficient condition for $\mathcal K(\rho,\mathcal C^\infty(M))$ to be  a resolution of $\mathcal C^\infty(\rho\inv(0))$.

\vspace{2mm}
\noindent{\emph{Acknowledgements.}} The authors would like to thank Srikanth Iyengar, Lucho Avramov and Markus Pflaum for stimulating discussions and moral support. We also acknowledge financial support  and hospitality provided  by the Center for the Quantization of Moduli Spaces (QGM) at Aarhus University, the University of Nebraska at Lincoln and the University of Cologne.

\section{Linear actions of reductive groups} \label{sec:reductive}

Let $G$ be a complex reductive group and let $V$ be a $G$-module. Then the algebra of invariant polynomial functions $\OO(V)^G$ is finitely generated. Set $\quot VG:=\Spec\OO(V)^G$. We have a canonical map $\pi\colon V\to \quot VG$ dual to the inclusion $\OO(V)^G\subset \OO(V)$. Each fiber of $\pi$ contains a unique closed orbit. Let $Gv$ be a closed orbit. Then the isotropy group $H:=G_v$ is reductive. The isotropy groups at points of $Gv$ all lie in the conjugacy class $(H)$ of $H$. Let $Z$ denote $\quot VG$ and let $Z_{(H)}$ denote the image of the closed orbits with isotropy group in $(H)$. The $Z_{(H)}$ are a finite stratification of $Z$ where each $Z_{(H)}$ is locally closed, smooth and irreducible. There is a unique open stratum which we denote as $Z_\pr$.   We call any element of the associated conjugacy class a \emph{principal isotropy group\/} and associated closed orbits are called \emph{principal orbits}. We set $V_\pr:=\pi\inv(Z_\pr)$. 
We say that $V$ is \emph{stable\/} if $V_\pr$ consists of closed orbits. We say that \emph{$V$ has FPIG\/} (respectively  \emph{$V$ has TPIG\/}) if its principal isotropy groups are finite (respectively trivial). In either case $V$ is stable. We say that $V$ is \emph{$m$-principal\/} if $V$ has FPIG and $\codim V\setminus V_\pr\geq m$, $m\geq 1$. If $V$ has FPIG, then it is automatically $1$-principal.

Let $V_{(j)}$ denote the points of $V$ where $\dim G_v=j$. Then the $V_{(j)}$ are locally closed subsets of $V$.
We say that the $G$-action on $V$ is \emph{locally free\/} if $V_{(0)}\neq\emptyset$.
 Let $c_j$ denote $\dim V_{(j)}-\dim G+j$. If $V_{(j)}$ is irreducible, then $c_j$ is the transcendence degree of  $\C(V_{(j)})^G$.   We say that $V$ is \emph{$m$-modular\/} if $V_{(0)}\neq \emptyset$ and $c_j+m\leq c_0$ for all $j\geq 1$. Note that this differs slightly from the definition given in \cite{LiftingDO} where we assume that $V$ has FPIG. If $V$ has FPIG, then $c_0=\dim\quot VG$. We say that $V$ is \emph{$m$-large\/} if it is $m$-principal and $m$-modular.  If $V$ is $0$-modular or has FPIG, then it is locally free. Conversely, if $V$ is locally free and $G$ is semisimple, then $V$ has FPIG \cite{Popov}.

There is a natural moment mapping   $\mu\colon V\oplus V^*\to\lieg^*$ where for $A\in\lieg$ and $(v,v^*)\in V\oplus V^*$ we have $\langle\mu(v,v^*),A\rangle=v^*(A(v))$. Choosing a basis $A_1\dots,A_k$ of $\lieg$ we may think of $\mu$ as a $k$-tuple  of functions $\mu_1,\dots,\mu_k \in\OO(V\oplus V^*)$.  Let $\J$ denote the ideal of the $\mu_i$ in $\OO(V\oplus V^*)$.

\begin{lemma}\label{lem:0modular}
\begin{enumerate}
\item Suppose that the $G$-action is locally free. Then $\mu$ has rank $\dim G$ on an open dense subset of $V\oplus V^*$.
\item The $G$-action on $V$ is $0$-modular if and only if the $\mu_i$ are a regular sequence in $\OO(V\oplus V^*)$.
\end{enumerate}
\end{lemma}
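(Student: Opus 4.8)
The common ingredient for both parts is the differential of $\mu$. The plan is to first record that, since $\mu$ is bilinear, its differential at $(v,v^*)$ in a direction $(w,w^*)$ is
\[
\langle d\mu_{(v,v^*)}(w,w^*),A\rangle = w^*(Av)+v^*(Aw),\qquad A\in\lieg .
\]
Dualizing, an element $A\in\lieg$ annihilates the image of $d\mu_{(v,v^*)}$ exactly when $Av=0$ and $A$ annihilates $v^*$ in the dual representation; thus this annihilator is the Lie algebra $\lieg_{(v,v^*)}$ of the stabilizer $G_{(v,v^*)}$, giving $\rk d\mu_{(v,v^*)}=\dim G-\dim G_{(v,v^*)}=\dim G(v,v^*)$. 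For (1) I would then argue as follows: local freeness means $V_{(0)}\neq\emptyset$, and since $v\mapsto\dim G_v$ is upper semicontinuous and $V$ is irreducible, $V_{(0)}$ is open and dense. For $v\in V_{(0)}$ and arbitrary $v^*$ the group $G_{(v,v^*)}\subseteq G_v$ is finite, so $\rk d\mu=\dim G$ on the dense open set $V_{(0)}\times V^*$, which is (1).

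For (2) the first step is to turn the algebraic statement into a dimension count. Since $\OO(V\oplus V^*)$ is Cohen--Macaulay and the bihomogeneous $\mu_i$ vanish at the origin, the common zero scheme $Y=V(\J)$ is nonempty, so Krull's height theorem gives $\codim Y\le k=\dim G$; and the standard criterion for a regular sequence of homogeneous elements in a Cohen--Macaulay ring says the $\mu_i$ form a regular sequence iff $\codim Y=\dim G$, i.e.\ iff $\dim Y=2\dim V-\dim G$. So it remains to compute $\dim Y$.

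To compute $\dim Y$ I would project $p\colon Y\to V$, $(v,v^*)\mapsto v$. The fiber over $v$ is the annihilator of $\lieg v=T_v(Gv)$, a linear space of dimension $\dim V-\dim Gv=\dim V-\dim G+\dim G_v$. Stratifying the base by the locally closed sets $V_{(j)}$, over which this fiber dimension is the constant $\dim V-\dim G+j$, gives $\dim p\inv(V_{(j)})=\dim V_{(j)}+(\dim V-\dim G+j)=\dim V+c_j$. As the stratification is finite, $\dim Y=\dim V+\max_j c_j$, so the $\mu_i$ form a regular sequence iff $\max_j c_j=\dim V-\dim G$.

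The last step, matching this with $0$-modularity, is where I expect the only real care to be needed, since the single equality $\max_j c_j=\dim V-\dim G$ must be shown to encode \emph{both} clauses of $0$-modularity, not just the inequalities $c_j\le c_0$. Letting $j_0$ be the generic stabilizer dimension, $V_{(j_0)}$ is dense, so $\dim V_{(j_0)}=\dim V$ and $c_{j_0}=\dim V-\dim G+j_0$, whence $\max_j c_j\ge\dim V-\dim G+j_0$. Therefore the equality forces $j_0=0$, i.e.\ $V_{(0)}\neq\emptyset$, and then $c_0=\dim V-\dim G$ and the equality reads $c_j\le c_0$ for all $j\ge1$ --- precisely $0$-modularity. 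Conversely $0$-modularity gives $V_{(0)}\neq\emptyset$, hence $c_0=\dim V-\dim G=\max_j c_j$, recovering the equality. Everything else is the rank/fiber bookkeeping together with the Cohen--Macaulay regular-sequence criterion.
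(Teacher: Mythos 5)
Your proposal is correct, and both parts are handled soundly; the interesting comparison is in how the two texts allocate the work. For part (1) you and the paper do essentially the same thing: the paper simply fixes $v\in V_{(0)}$ and observes that $v^*\mapsto\mu(v,v^*)$ is a linear map of rank $k=\dim G$ (its transpose $A\mapsto A(v)$ being injective), while you compute the full differential and identify the annihilator of its image with $\lieg_{(v,v^*)}$ --- the same computation, carried out in slightly more generality than needed. For part (2), however, the paper gives no argument at all: it cites \cite[Proposition 9.4]{LiftingDO}. Your contribution is therefore a self-contained proof, and it is a valid one: the Cohen--Macaulay/height criterion legitimately converts the regular-sequence condition into $\dim Y=2\dim V-\dim G$ (the $\mu_i$ are homogeneous of degree $2$, so the graded version of the criterion applies); the fiber of $Y\to V$ over $v\in V_{(j)}$ is $\Ann(\lieg v)$, of dimension $\dim V-\dim G+j$, giving $\dim Y=\dim V+\max_j c_j$ over the finite stratification; and your closing step --- using the dense stratum $V_{(j_0)}$ to show that the single equality $\max_j c_j=\dim V-\dim G$ forces $j_0=0$, i.e.\ local freeness, in addition to the inequalities $c_j\le c_0$ --- is exactly the point where a careless argument would only recover half of $0$-modularity, and you handle it correctly. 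Note that your stratified dimension count (each $Y_j:=(V_{(j)}\times V^*)\cap Y$ fibering over $V_{(j)}$ with fiber dimension $\dim V-\dim G+j$) is precisely the computation the paper itself performs later, in the proof of Theorem \ref{thm:2large}(3), so your argument is the natural ``in-house'' proof of the cited result: the citation buys brevity, while your version makes the lemma independent of \cite{LiftingDO} and makes transparent why the definition of $0$-modularity, including the clause $V_{(0)}\neq\emptyset$, is exactly the right condition.
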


\begin{proof}
Part (1) is trivial, since for fixed $v\in V_{(0)}$ and $A\in\lieg$, $\mu(v,\cdot)(A)$ is the linear function $v^* \mapsto v^*(A(v))$ where $\lieg(v)$ has dimension $k=\dim G$. Part (2) is \cite[Proposition 9.4]{LiftingDO}.  
\end{proof}

It is useful to think of $\OO(V\oplus V^*)$ as $\gr\diff(V)$ where $\diff(V)$ is the algebra of algebraic differential operators on $V$ and $\gr$ is taken with respect to the filtration by order of differentiation. Let $Y\subset V\oplus V^*$ be the subscheme corresponding to $\J$ .  We consider $\lieg$ as differential operators of order $1$ on $V$.

\begin{theorem}\label{thm:2large}
Set $Q:=\diff(V)/(\diff(V)\lieg)$. Then
\begin{enumerate}
\item $(\diff(V)\lieg)^G$ is an ideal in $\diff(V)^G$.  Hence $Q^G$ is naturally an algebra of differential operators on $\OO(\quot VG)$.
\item If $V$ is $0$-modular, then $\gr Q\simeq\OO(Y)$ and $Y$ is a complete intersection.
\item If $V$  is $1$-large, then  %
$Y$ is reduced and irreducible, i.e., $\J$ is prime.
\item If $V$ is $2$-large, then %
$Y$ is normal.
\item If $V$ is $2$-large, then $Q^G\simeq\diff(\quot VG)$.
\end{enumerate}
\end{theorem}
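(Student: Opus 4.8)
The plan is to treat the five parts in order, with (1) and (2) being formal consequences of the almost-commutative filtered structure of $\diff(V)$, (3) and (4) following from a single Jacobian computation on $Y$ together with Serre's criteria, and (5) being the substantial assertion. For (1), I would observe that $\diff(V)\lieg$ is visibly a left ideal, and that for any $P\in\diff(V)^G$ and $A\in\lieg$ one has $[A,P]=0$, since $A$ acts on $\diff(V)$ by $\ad_A$ and $P$ is invariant; hence $AP=PA$ and $(DA)P=(DP)A\in\diff(V)\lieg$, so $\diff(V)\lieg$ is stable under right multiplication by $\diff(V)^G$. Applying the Reynolds operator (here reductivity of $G$ makes invariants exact), $(\diff(V)\lieg)^G$ becomes a two-sided ideal of $\diff(V)^G$ and $Q^G=\diff(V)^G/(\diff(V)\lieg)^G$. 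Because $\lieg$ annihilates $\OO(V)^G$, the subspace $\diff(V)\lieg$ kills $\OO(\quot VG)=\OO(V)^G$, so $Q^G$ acts on $\OO(\quot VG)$ by differential operators. For (2), I would filter $Q$ by order of differentiation; since $\gr\diff(V)\cong\OO(V\oplus V^*)$ carries $A_i$ to its symbol $\mu_i$, there is always a surjection $\OO(Y)=\OO(V\oplus V^*)/\J\twoheadrightarrow\gr Q$, and it is an isomorphism exactly when no unexpected symbols arise from $\diff(V)\lieg$. The standard way to guarantee this is to compare the Koszul complex on the operators $A_1,\dots,A_k$ with the Koszul complex on their symbols $\mu_1,\dots,\mu_k$: exactness of the latter (a regular sequence) forces the two associated gradeds to agree, giving $\gr Q\cong\OO(Y)$ and exhibiting $Y$ as a complete intersection of codimension $k=\dim G$. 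By Lemma~\ref{lem:0modular}(2) this regular-sequence condition is precisely $0$-modularity.

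The geometric core is a computation of the smooth locus of $Y$. At $(v,v^*)\in Y$ the differential $d\mu$ is surjective onto $\lieg^*$ if and only if the transpose map $A\mapsto d(\mu_A)_{(v,v^*)}$ is injective; since $d(\mu_A)_{(v,v^*)}(\delta v,\delta v^*)=\delta v^*(Av)+v^*(A\,\delta v)$, this transpose has kernel $\lieg_v\cap\lieg_{v^*}=\lieg_{(v,v^*)}$, the isotropy algebra of $(v,v^*)$ for the $G$-action on $V\oplus V^*$. As $Y$ is a complete intersection, I conclude that $Y$ is smooth at $(v,v^*)$ iff $\dim G_{(v,v^*)}=0$; in particular the singular locus lies over $\bigcup_{j\geq1}V_{(j)}$. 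The fibre of $Y\to V$ over $v\in V_{(j)}$ is $\Ann(\lieg v)$ of dimension $\dim V-\dim G+j$, so the part of $Y$ over $V_{(j)}$ has dimension $c_j+\dim V$ while $\dim Y=c_0+\dim V$; hence each such stratum, and therefore the singular locus, has codimension at least $c_0-c_j$ in $Y$. Under $m$-modularity (contained in $m$-largeness, and implying $0$-modularity, hence the complete intersection of part (2)) this codimension is $\geq m$.

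For (3), $1$-largeness gives codimension $\geq 1$, i.e.\ $R_0$; combined with the $S_1$ coming from the complete intersection structure this yields reducedness. Moreover the open stratum over $V_{(0)}$ is an irreducible vector bundle of dimension $\dim Y$, while its complement has strictly smaller dimension; since a complete intersection is equidimensional, $Y$ has no other component and is therefore irreducible, so $\J$ is prime. For (4), $2$-largeness upgrades the bound to codimension $\geq 2$, i.e.\ $R_1$; with $S_2$ (complete intersections are Cohen--Macaulay) Serre's criterion gives normality.

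Part (5) is the hard one and is the heart of the lifting theorem. I would use part (1) to get a filtered homomorphism $Q^G\to\diff(\quot VG)$ and prove it is an isomorphism on associated gradeds. By reductivity and part (2), $\gr(Q^G)=(\gr Q)^G=\OO(Y)^G=\OO(\quot YG)$, which is a normal domain because $Y$ is normal by part (4) and GIT quotients of normal affine varieties by reductive groups are normal. Over the principal stratum $Z_\pr$ the action on $V_\pr$ has finite isotropy and closed orbits, so Luna's slice theorem identifies the reduction and shows that the map is an isomorphism of gradeds there; equivalently $\gr\diff(\quot VG)$ and $\OO(\quot YG)$ agree over $Z_\pr$. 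The point of $2$-largeness is now twofold: the non-principal locus has codimension $\geq 2$ in $\quot VG$ (from $2$-principality, via $\pi$), and $\OO(\quot YG)$ is normal (from part (4)), so a section defined and regular away from a set of codimension $\geq 2$ extends uniquely. This lets me propagate the isomorphism from $Z_\pr$ across the singular strata, after which exhaustiveness of the filtrations promotes the isomorphism of gradeds to $Q^G\cong\diff(\quot VG)$. The main obstacle I anticipate is exactly this last extension step: correctly identifying $\gr\diff(\quot VG)$ for the singular variety $\quot VG$ and verifying that normality together with the codimension-$2$ estimate suffices to push the generic comparison over the bad locus. This is precisely where $2$-large, rather than merely $1$-large, is indispensable, since $1$-largeness would give only $R_0$ and reducedness, which does not permit the extension of operators.
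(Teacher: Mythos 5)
Your parts (1)--(4) are correct, and (2)--(4) take a genuinely different route from the paper. Part (1) is the paper's argument essentially verbatim. For (2) the paper simply cites \cite[Proposition 8.11]{LiftingDO}; your symbol-comparison argument is the standard proof of such a result, but needs one repair: the operators $A_1,\dots,A_k$ do not commute (they span the Lie algebra $\lieg$), so there is no Koszul complex ``on the operators''; the filtered complex whose associated graded is the Koszul complex on $\mu_1,\dots,\mu_k$ is the Chevalley--Eilenberg-type complex $\diff(V)\otimes\Lambda^\bullet\lieg$. For (3) and (4) the paper argues via \cite[Lemma 9.7]{LiftingDO}: a nonzero invariant $f$ vanishing off $V_\pr$ is a nonzerodivisor on $\OO(Y)$, so $\OO(Y)$ embeds in $\OO(Y_f)$ with $Y_f$ smooth, giving reducedness; and a regular sequence $f_1,f_2$ of such invariants puts the singularities in codimension two, giving normality. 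You instead compute the non-regular locus of the complete intersection $Y$ directly from the Jacobian --- correctly identifying that $d\mu$ has full rank at $(v,v^*)$ exactly when $\lieg_v\cap\lieg_{v^*}=0$, so the singular locus lies over $\bigcup_{j\geq 1}V_{(j)}$ --- and then invoke Serre's criteria, the $S_n$ conditions being free because complete intersections are Cohen--Macaulay; your dimension count $\dim Y_j=c_j+\dim V\leq \dim Y-m$ is the same as the paper's. Your route is more self-contained (no appeal to \cite[Lemma 9.7]{LiftingDO}, hence no use of FPIG) and proves the sharper statement that $1$- resp.\ $2$-modularity plus local freeness suffice for (3) resp.\ (4); this is precisely the strengthening the paper records, via Avramov and Panyushev, in the remark following Remark \ref{rem:Ynotirred}. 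Your irreducibility argument (equidimensionality of complete intersections plus the dimension count) coincides with the paper's.

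Part (5) is where your proposal has a genuine gap. The paper proves (5) by citing \cite[Theorem 0.4(1)]{LiftingDO}, and that is essentially unavoidable: the isomorphism $Q^G\simeq\diff(\quot VG)$ \emph{is} the main theorem of that paper. Your outline (filtered map $Q^G\to\diff(\quot VG)$, graded comparison $\gr Q^G\simeq\OO(Y)^G$, isomorphism over the principal stratum, extension across the non-principal locus using normality and codimension at least two) is a fair skeleton of how that theorem is proved, but the step you defer as the ``main obstacle'' is not a technical extension lemma --- it is the theorem itself. Concretely, one must first show that symbols of Grothendieck-sense differential operators on the singular variety $\quot VG$ give well-defined regular functions on the reduced space, i.e., that $\gr\diff(\quot VG)$ maps to $\OO(\quot YG)$ compatibly with the map from $\gr Q^G$; and graded surjectivity then requires an induction on order in which the extended symbol is shown to be the symbol of an operator that actually lifts, with control of the order filtration at each stage. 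Normality of $\OO(\quot YG)$ together with the codimension estimate is one ingredient of this, not the whole argument. So your treatment of (5) should be regarded as a restatement of the cited theorem together with a plan of attack, not a proof; the honest course (and the paper's) is to cite \cite[Theorem 0.4(1)]{LiftingDO}.
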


 \begin{proof}
One can see that for any $A\in\diff(V)^G$ and $B\in(\diff(V)\lieg)^G$, $AB$ is obviously in $(\diff(V)\lieg)^G$ and $BA$ is also since $A$ commutes with $\lieg$. Hence we have (1). For (2) we can appeal to \cite[Proposition 8.11]{LiftingDO} and Lemma \ref{lem:0modular}. For (3)  let $Y_j$ denote $(V_{(j)}\times V^*)\cap Y$ for $j\geq 0$. Then each $Y_j$ is a vector bundle over $V_{(j)}$ with fiber dimension $\dim V-\dim G+j$ and $Y_0$ is smooth.
Let $0\neq f\in\OO(V)^G$ such that $f$ vanishes on $V\setminus V_\pr$. Then $f$ is not a zero divisor of $\OO(Y)$ \cite[Lemma 9.7]{LiftingDO}, hence $\OO(Y)\to\OO(Y)_f=\OO(Y_f)$ is injective. Since $Y_f$ is open in $Y_0$, it is smooth, hence reduced. Thus  $Y$ is reduced. Now  every irreducible component of $Y$ has codimension  $k=\dim G$.  For $j\geq 1$ we have that
$$
\dim Y_j=\dim V_{(j)}+\dim V-\dim G+j=c_j+\dim V\leq 2\dim V-\dim G-1=\dim Y-1
$$
since $V$ is $1$-modular. Thus $Y_j$ has codimension greater than $k$, hence its closure is not dense in any irreducible component of $Y$. Since $V_{(0)}$ is irreducible, so is $Y_0$. Thus $Y=\overline{Y_0}$ is irreducible  and we have (3). For (4), we know that  $Y_0$ is smooth and since $V$ is $2$-large, there are invariants $f_1$ and $f_2$ which are a regular sequence for $\OO(Y)$ which vanish off $V_\pr\times V^*$ \cite[Lemma 9.7]{LiftingDO}, hence they vanish off of $Y_0$. Thus  the singularities of $Y$  are in codimension two and we have normality of $Y$  giving (4). Part (5) is \cite[Theorem 0.4(1)]{LiftingDO}.
\end{proof}	

\begin{remark}\label{rem:Ynotirred} Suppose that $V$ has FPIG.
It is clear from the proof above that if $V$  fails to be $1$-modular, then $Y$ is not irreducible.  Also,   there is a unique irreducible component $\overline{Y_0}$ of $Y$ whose projection to $V$ is dense.
\end{remark}

\begin{remark}
One can also combine work of D.\ Panyushev on the Jacobian module \cite{Panyushev} and work of L.\ Avramov on symmetric algebras \cite{Avramov} to obtain  % 
part of (2) above and (3) and (4) with ``large'' replaced by ''modular.'' 
We need to recall the notion of the  Jacobian module. Consider $\lieg$ as a subset of $V^*\otimes V$ and let $R$ denote $\OO(V)$. Then we have the following map of free $R$-modules:
$$
 R\otimes \lieg\xrightarrow{\widehat{\mu}} R\otimes V,\quad f\otimes\sum_i \xi_i\otimes v_i \mapsto \sum_i f\xi_i\otimes v_i
 $$
where the $v_i$ are in $V$,  the $\xi_i$ are in $V^*\subset R$ and $\sum_i \xi_i\otimes v_i\in\lieg$. The \emph{Jacobian module} $E$ is defined to be the cokernel of $\widehat{\mu}$. Note that $\widehat{\mu}$, in coordinates, is a matrix of linear forms.
It is known (cf.\ \cite[Theorem 1.3]{Panyushev}) that the coordinate ring $\C[Y]$ of the zero fibre $Y=\mu^{-1}(0)\subset V\oplus V^*$ is isomorphic to the symmetric algebra $S_R(E)$. Let $I_t(\widehat{\mu})$ be the ideal in $R$ that is generated by the $t\times t$-minors of the matrix $\widehat{\mu}$. For $d\geq 0$  consider the condition 
\begin{eqnarray*}
(\mathcal F_d): \qquad \operatorname{grade}I_t(\widehat{\mu})\ge \rk(\widehat{\mu})-t+1+d,\quad \text{ for }1\le t\le \rk(\widehat{\mu}).
\end{eqnarray*}
 In  case $V$ is locally free, we can apply \cite[Theorem 2.4]{Panyushev}) to conclude that condition $(\mathcal F_d)$ is equivalent to the representation being $d$-modular. Applying \cite[Proposition 3.(2)]{Avramov}, it follows that $S_R(E)$ is a complete intersection if $(\mathcal F_0)$ holds. By \cite[Theorem 1.8]{Panyushev}, $\widehat{\mu}$ is injective, and hence, $\operatorname{pd}_R(E)=1$. 
Then \cite[Proposition 5]{Avramov} shows that if $V$ is $1$-modular, then  $S_R(E)\cong \C[Y]$ is a domain. Conversely, if $S_R(E)\cong \C[Y]$ is a domain and $V$ is locally free, then it is  $1$-modular. 
Finally, by  \cite[Proposition 6]{Avramov}, if $V$ is $2$-modular  then $S_R(E)\cong \C[Y]$ is factorial, hence normal.
\end{remark}

 \section{Modules which are $1$-large}\label{sec:1large}
 
 We consider modules which are $1$-large for various classes of groups. It turns out that $1$-largeness usually  holds. From \cite[Proposition 11.5]{LiftingDO} we have
 
 \begin{proposition}
Let $(V,G)$ be $1$-large. Then so is $V\oplus V'$ for any $G$-module $V'$ .
 \end{proposition}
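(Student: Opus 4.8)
The plan is to verify the two ingredients of $1$-largeness for $W:=V\oplus V'$ directly from the definitions in \S\ref{sec:reductive}: that $W$ has FPIG (which, since a module with FPIG is automatically $1$-principal, settles $1$-principality) and that $W$ is $1$-modular. Write $k=\dim G$, $n=\dim V$, $n'=\dim V'$, and let $c_j$ (resp.\ $c_j'$) denote the invariants of $V$ (resp.\ of $W$). Because $G$ acts diagonally, $G_{(v,v')}=G_v\cap G_{v'}$ for $(v,v')\in W$; in particular the $G$-equivariant projection $p\colon W\to V$ satisfies $G_w\subseteq G_{p(w)}$, so the isotropy on $W$ is controlled by that on $V$.

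I would dispose of $1$-modularity first, as it is essentially bookkeeping. Since $V$ is $1$-large it has FPIG, hence is locally free, so $V_{(0)}$ is a dense open subset of $V$. For $v\in V_{(0)}$ the group $G_v$ is finite, whence $G_{(v,v')}$ is finite for every $v'$; thus $V_{(0)}\times V'\subseteq W_{(0)}$, so $W_{(0)}\neq\emptyset$ and $c_0'=\dim W_{(0)}-k=n+n'-k$. For $j\geq 1$ the inclusion $G_w\subseteq G_{p(w)}$ forces $W_{(j)}\subseteq\bigcup_{i\geq j}V_{(i)}\times V'$. Now $1$-modularity of $V$ gives $c_i\leq c_0-1$ for $i\geq 1$, and since $c_0=n-k$ this reads $\dim V_{(i)}=c_i+k-i\leq n-1-i\leq n-1-j$ for all $i\geq j\geq 1$. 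Hence
$$\dim W_{(j)}\leq \max_{i\geq j}\bigl(\dim V_{(i)}+n'\bigr)\leq (n-1-j)+n',$$
so that $c_j'=\dim W_{(j)}-k+j\leq n+n'-k-1=c_0'-1$. Thus $W$ is $1$-modular.

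The substantial point is that $W$ has FPIG, and here I expect the main obstacle: passing from finite generic isotropy on $V$ to finite generic isotropy on $W$ fails for a merely locally free $V$ (e.g.\ $\C^*$ scaling on $\C$), so I must exploit the stronger hypothesis. Since $V$ has FPIG it is stable, so the dense open set $V_\pr$ consists of closed orbits, each with finite (principal) isotropy. The key lemma I would prove is: if $Gv$ is closed and $G_v$ is finite, then $G(v,v')$ is closed for every $v'\in V'$. Granting this, every orbit through the dense open set $V_\pr\times V'\subseteq W$ is closed with finite stabilizer; as the principal orbits of $W$ are among these, the principal isotropy group of $W$ is finite, i.e.\ $W$ has FPIG and hence is $1$-principal. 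Combined with the previous paragraph, $W$ is $1$-principal and $1$-modular, hence $1$-large.

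It remains to sketch the lemma, which carries the geometric content. Fix $w=(v,v')$ with $Gv$ closed and $G_v$ finite; then $G_w\subseteq G_v$ is finite, so $\dim Gw=k=\dim\overline{Gw}$. The orbit closure $\overline{Gw}$ contains a unique closed orbit $Gw_0$, and $p(\overline{Gw})\subseteq\overline{Gv}=Gv$ shows $p(w_0)\in Gv$; replacing $w_0$ by a suitable $G$-translate we may assume $p(w_0)=v$, so $G_{w_0}\subseteq G_v$ is finite and $\dim Gw_0=k$. A closed irreducible subset of dimension $k$ inside the irreducible variety $\overline{Gw}$ of dimension $k$ must exhaust it, so $Gw_0=\overline{Gw}$, and therefore $Gw=Gw_0$ is closed. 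This orbit-closure step is the crux; everything else reduces to the dimension count above.
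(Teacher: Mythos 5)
Your argument is essentially correct, and it is necessarily a different route from the paper's: the paper gives no proof of this proposition at all, simply citing \cite[Proposition 11.5]{LiftingDO}. Your decomposition into the two halves of the definition (FPIG, hence $1$-principality, plus $1$-modularity) is the natural one; the dimension count for $1$-modularity is right; and your key lemma --- if $Gv$ is closed with $G_v$ finite, then $G(v,v')$ is closed for every $v'$ --- is true and is indeed the crux. (An alternative proof of the lemma: $p\inv(Gv)=Gv\times V'$ is closed in $W$ and $G$-isomorphic to the associated bundle $G\times^{G_v}(\{v\}\times V')$, under which closedness of $G$-orbits corresponds to closedness of $G_v$-orbits in $V'$; the latter is automatic since $G_v$ is finite.)

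Two points need tightening. First, your proof of the lemma invokes irreducibility of $Gw_0$ and of $\overline{Gw}$, which presumes $G$ connected; but the paper's groups are complexifications of arbitrary compact groups (it even lists $(p\C^n,\Orth_n)$ among its $1$-large examples), so $G$ may be disconnected. The fix is easy: either reduce to the identity component (a $G$-orbit is closed if and only if the corresponding $G^0$-orbit is), or, cleaner, replace the irreducibility argument by the standard fact that $\overline{Gw}\setminus Gw$ is a union of orbits of dimension strictly less than $\dim Gw=k$; then the closed $k$-dimensional orbit $Gw_0\subseteq\overline{Gw}$ must meet $Gw$, hence equal it. Second, the clause ``as the principal orbits of $W$ are among these'' is exactly the place a reader will stop: a priori a given principal orbit of $W$ need not meet $V_\pr\times V'$ (and in general not all of them do). What you need, and what is true, is that \emph{some} principal orbit does: $W_\pr$ (the preimage in $W$ of the open principal stratum of $\quot{W}{G}$) and $V_\pr\times V'$ are both open and dense in $W$, so they intersect; for $w$ in the intersection your lemma makes $Gw$ closed, and the unique closed orbit in a fiber over the principal stratum is by definition a principal orbit, so $G_w$ is a principal isotropy group of $W$ and it is finite. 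Since all principal isotropy groups are conjugate, $W$ has FPIG. With these two repairs the proof is complete.
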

 
 \begin{theorem}\label{thm:torus}
 Let $T\simeq(\C^*)^n$ be a torus and let $V$ be a faithful $T$-module. Then $V$ is stable if and only if it is $1$-large.
 \end{theorem}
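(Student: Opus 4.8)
The plan is to translate both hypotheses into the combinatorics of the weights and then decide the equivalence with a convexity (Gordan--Stiemke) dichotomy. Since $T$ is a torus, $V$ splits into weight lines $V=\bigoplus_{i=1}^m\C_{\chi_i}$ with weights $\chi_1,\dots,\chi_m\in X^*(T)\cong\Z^n$, and faithfulness means the $\chi_i$ generate $X^*(T)$, so in particular they span $\R^n$. For $v\in V$ write $\operatorname{supp}(v)=\{i:v_i\neq 0\}$. First I would record three dictionary entries: (a) the isotropy algebra at $v$ has dimension $n-\rk\{\chi_i:i\in\operatorname{supp}(v)\}$; (b) the orbit $Tv$ is closed if and only if $0\in\operatorname{relint}\operatorname{conv}\{\chi_i:i\in\operatorname{supp}(v)\}$, by the Hilbert--Mumford criterion applied to the one-parameter subgroups of $T$; and (c) $\dim\quot VT=m-n$. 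Grouping points by their support $S$, the stratum $V_{(j)}=\{v:\dim T_v=j\}$ is the union of the coordinate strata $V_S\cong(\C^*)^{|S|}$ over all $S$ with $\rk\{\chi_i:i\in S\}=n-j$, whence $\dim V_{(j)}=\max\{|S|:\rk(S)=n-j\}$.

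Next I would rewrite the two notions combinatorially. A generic point has full support, so $V$ is \emph{stable} if and only if the full-support orbit is closed, i.e.\ $0\in\operatorname{int}\operatorname{conv}\{\chi_1,\dots,\chi_m\}$; by the Stiemke alternative this is equivalent to the existence of a strictly positive relation $\sum_{i=1}^m c_i\chi_i=0$ with all $c_i>0$. For $1$-largeness, the crucial point is that FPIG is \emph{not} automatic here: the principal isotropy group is the isotropy of the closed orbit lying over the principal stratum, and by (a)--(b) it is finite exactly when that orbit has support $S_{\pr}$ with $\rk(S_{\pr})=n$ and $0\in\operatorname{relint}\operatorname{conv}(S_{\pr})$. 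Using $c_0=m-n$ and the formula for $\dim V_{(j)}$, the $1$-modular condition $c_j+1\le c_0$ for $j\ge 1$ becomes the single matroid inequality: for every $S$ with $\rk(S)<n$ one has $|S|-\rk(S)\le m-n-1$.

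I would then prove the equivalence in two steps. For stable $\Rightarrow$ $1$-large: stability produces a closed full-support orbit whose isotropy is $\bigcap_i\ker\chi_i=\{1\}$ by faithfulness, so $V$ has TPIG, hence FPIG, hence is $1$-principal by the cited automatic codimension bound. For $1$-modularity, fix $S$ with $\rk(S)=r<n$ and project the strictly positive relation to $\R^n/\operatorname{span}\{\chi_i:i\in S\}$; this kills the terms indexed by $S$ and leaves a strictly positive relation among the images of the weights in $S^c$, which therefore span the $(n-r)$-dimensional quotient yet are linearly dependent, forcing $|S^c|\ge n-r+1$ and hence $|S|-\rk(S)\le m-n-1$. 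Conversely, for $1$-large $\Rightarrow$ stable I would run the convexity argument backwards: $1$-large implies FPIG, so there is a closed orbit over the principal stratum with support $S_{\pr}$ of full rank $n$ and $0\in\operatorname{relint}\operatorname{conv}(S_{\pr})$; since $\operatorname{conv}(S_{\pr})$ is then $n$-dimensional, $0$ is an interior point, so $0\in\operatorname{int}\operatorname{conv}\{\chi_1,\dots,\chi_m\}$ and $V$ is stable.

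The main obstacle is conceptual rather than computational: one must not conflate the generic stabilizer on $V$ with the principal isotropy group, the latter being the isotropy of the closed orbit over the principal stratum. The example $T=\C^*$ acting with weights $1,2$ is instructive: the generic stabilizer is trivial, yet the only closed orbit is the origin, so $V$ fails FPIG and is not stable, even though it happens to satisfy the $1$-modular inequality. This shows both that $1$-modularity alone is strictly weaker than stability and that the FPIG half of $1$-largeness is doing essential work. Once the principal-isotropy/convexity dictionary is set up correctly, the only remaining inputs are the standard Stiemke alternative and the elementary projection argument, together with the earlier fact that FPIG already forces $1$-principality.
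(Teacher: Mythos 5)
Your proof is correct, and at the decisive step it takes a genuinely different, more self-contained route than the paper. The paper's proof is three lines: the direction ($1$-large $\Rightarrow$ stable) is automatic because $1$-largeness includes FPIG and FPIG implies stability (as recorded in \S\ref{sec:reductive}); conversely, stability plus faithfulness gives trivial principal isotropy groups, hence $1$-principality, and $1$-modularity is then obtained by citing a theorem of Vinberg \cite{Vinberg}. You replace the appeal to Vinberg by an explicit combinatorial argument: you translate $c_j+1\le c_0$ into the inequality $|S|-\rk(S)\le m-n-1$ for supports $S$ of non-maximal rank, and deduce it from the strictly positive relation $\sum_i c_i\chi_i=0$ (Stiemke) by projecting modulo $\operatorname{span}\{\chi_i:i\in S\}$ --- in effect re-proving the torus case of Vinberg's theorem. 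This is what your approach buys: a proof whose only inputs are the Hilbert--Mumford criterion and elementary convexity, and one that makes explicit the link to the convex-hull criterion that the paper relegates to the remark after the theorem (Wehlau's lemmas); what the paper's approach buys is brevity and placement in the general framework of complexity of reductive group actions. Two small caveats, neither damaging. First, your dictionary entry (c), $\dim\quot VT=m-n$, is false without stability --- your own instructive example (weights $1,2$ for $\C^*$) has $\dim\quot VT=0\neq m-n=1$; this is harmless because what you actually use is $c_0=m-n$, which holds by the definition $c_0=\dim V_{(0)}-\dim T$ since $V_{(0)}$ is dense for a faithful torus module. Second, the implication ``all full-support orbits closed $\Rightarrow$ stable in the paper's sense ($V_\pr$ consists of closed orbits)'' is not quite delivered by your entry (b) alone; it needs the standard observation that a fiber of $\pi$ through a closed orbit with finite stabilizer is exactly that single orbit (or a citation of \cite[Lemma 2]{WehlauPop}, as the paper itself makes). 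Your closing discussion of FPIG versus $1$-modularity, with the weights $1,2$ example, correctly identifies where stability actually enters, which is exactly the point the paper's terse proof leaves implicit.
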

 
 \begin{proof} We may assume that   $T$ acts stably. Then the principal isotropy groups are trivial and $V$ is $1$-principal.
 By a theorem of Vinberg \cite{Vinberg} it follows that  $V$ is   $1$-modular, hence $1$-large.
  \end{proof}
 
   \begin{remark}  
Let $T\to\GL(V)$ be a torus action, not necessarily effective. Then $V$ is  stable if and only if $0$ is in the  relative interior of the convex hull of the weights  (\cite[Lemma 2]{WehlauPop}, \cite[Lemma 1]{WehlauCons}). If $S\to U(V)$ is a unitary action of a compact torus $S$, the condition on the convex hull of the weights is one of the equivalent conditions  in   \cite[Proposition 2.2]{HIP09} for the action of $S$ to satisfy the ``generating hypothesis.'' Thus these conditions are equivalent to the stability of the action of the complexification $T=S_\C$ on $V$.
    \end{remark}
    
For non-toral groups things are a bit more difficult, but there are some  general results, mainly coming from our investigations of  differential operators on quotients \cite{JAlg94,LiftingDO}.

Let $R_j$ denote the representation of $\SL_2$ on the binary forms of degree $j$, $j\in\Z^+$. 
\begin{theorem}\label{thm:sl2}(cf.\ \cite[Example 7.14]{AGJ90})
Let $V$ be a nonzero $\SL_2$-module with $V^{\SL_2}=(0)$. Then   $V$ is $1$-large, except when $V\simeq R_1$, $V\simeq 2R_1$ or $V\simeq R_2$.
\end{theorem}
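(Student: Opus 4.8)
The plan is to write $V=\bigoplus_{i=1}^s R_{j_i}$ with all $j_i\ge 1$ (possible since $V^{\SL_2}=(0)$ forbids $R_0$) and to reduce the whole statement to a single dimension estimate. Since $\SL_2$ is semisimple, local freeness of $V$ implies FPIG by the result of Popov recalled in \S\ref{sec:reductive}, and FPIG implies $1$-principality; moreover for a module with FPIG one has $c_0=\dim V-3$. So once $V$ is shown to be locally free, being $1$-large is equivalent to being $1$-modular, and the condition $c_j+1\le c_0$ for $j\ge 1$ unwinds (using $c_j=\dim V_{(j)}-3+j$) to the codimension bounds $\codim_V V_{(j)}\ge j+1$, where $V_{(j)}=\{v:\dim(\SL_2)_v=j\}$. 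The entire theorem thus comes down to controlling the strata $V_{(1)}$, $V_{(2)}$, $V_{(3)}$.

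Two of these strata are immediate. Because $V^{\SL_2}=(0)$, we have $V_{(3)}=\{0\}$. For $V_{(2)}$, a $2$-dimensional stabilizer has a Borel as identity component, so such a vector lies in $V^B=V^U\cap V^T$; but a highest-weight vector has strictly positive weight, so $V^U\cap V^T=(0)$ and hence $V_{(2)}=\emptyset$. This leaves $V_{(1)}$. Every nonzero element of $\sll_2$ is regular semisimple or nilpotent, so a $1$-dimensional stabilizer has identity component conjugate to the torus $T$ or to the maximal unipotent $U$; therefore $V_{(1)}\subseteq\bigl(\bigcup_g gV^T\bigr)\cup\bigl(\bigcup_g gV^U\bigr)$. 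I would bound the two unions by factoring $\SL_2\times V^T\to V$ through $\SL_2\times_{N(T)}V^T$ and $\SL_2\times V^U\to V$ through $\SL_2\times_{B}V^U$. Writing $e$ for the number of even $j_i$ (so $\dim V^T=e$, one zero-weight line per even summand) and $s$ for the number of summands (so $\dim V^U=s$, one highest-weight line per summand), and using $\dim\SL_2/N(T)=2$, $\dim\SL_2/B=1$, this gives $\dim V_{(1)}\le\max(e+2,\,s+1)$.

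The theorem then reduces to the elementary inequality $\max(e+2,\,s+1)\le\dim V-2$. From $\dim V=\sum_i(j_i+1)\ge e+2s$ (even summands have dimension $\ge 3$, odd ones $\ge 2$) and $s\ge e$, I would check that $s+1\le\dim V-2$ fails exactly for $V\simeq R_1,R_2,2R_1$, while $e+2\le\dim V-2$ fails exactly for $V\simeq R_1,R_2$; for every other $V$ both hold. Hence for $V\not\simeq R_1,R_2,2R_1$ one gets $\dim V_{(1)}\le\dim V-2<\dim V$. This single bound does double duty: it shows that the positive-dimensional-stabilizer locus is proper, so $V$ is locally free (whence FPIG and $1$-principality), and it gives $\codim V_{(1)}\ge 2$; since $\max(e+2,s+1)\ge s+1\ge2$ the bound also forces $\dim V\ge4$, so $\codim V_{(3)}=\dim V\ge4$, and $V_{(2)}=\emptyset$ handles $j=2$. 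Thus every such $V$ is $1$-modular, hence $1$-large.

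It remains to dispose of the three exceptions, which is direct: $R_1$ and $R_2$ are not locally free (a generic vector has a $1$-dimensional unipotent, resp.\ toral, stabilizer), so they fail to be $0$-modular; and $2R_1$ is locally free with $\quot VG$ of dimension $1$, but the locus of pairs of linearly dependent vectors gives $\dim V_{(1)}=3=\dim V-1$, so $\codim V_{(1)}=1<2$ and $1$-modularity fails. I expect the main work to lie in the dimension estimate for $V_{(1)}$ and, above all, in the bookkeeping that pins the failure of $\max(e+2,\,s+1)\le\dim V-2$ to exactly these three modules; the structural reductions $V_{(2)}=\emptyset$, $V_{(3)}=\{0\}$ and the chain from local freeness to FPIG to $1$-principality are comparatively routine given the results of \S\ref{sec:reductive}.
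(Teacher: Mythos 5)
Your proposal is correct, but it takes a genuinely different route from the paper. The paper's entire proof is a citation: the authors refer to the list of $\SL_2$-modules that are not $2$-large in \cite[Theorem 11.9]{LiftingDO} and assert that the techniques used there show every entry of that list is $1$-large except $R_1$, $2R_1$ and $R_2$. You instead give a self-contained argument: after reducing $1$-largeness to the codimension bounds $\codim V_{(j)}\ge j+1$ (using Popov's theorem that local freeness implies FPIG for semisimple groups, FPIG implies $1$-principality, and $c_0=\dim V-3$), you dispose of $V_{(3)}=\{0\}$ and $V_{(2)}=\emptyset$ structurally, and bound $V_{(1)}$ by the images of $\SL_2\times_{N(T)}V^T$ and $\SL_2\times_{B}V^U$, turning the theorem into the inequality $\max(e+2,\,s+1)\le\dim V-2$. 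Your case analysis is right: failure of $s+1\le\dim V-2$ forces $\dim V\le s+2$, hence $s\le 2$, giving exactly $R_1$, $R_2$, $2R_1$; failure of $e+2\le\dim V-2$ forces $s=1$ (since $\dim V\ge e+2s$), giving exactly $R_1$, $R_2$; and your direct verification that the three exceptions fail (non--local-freeness for $R_1$, $R_2$; the codimension-one dependent locus for $2R_1$) is also correct. What the paper's approach buys is brevity and uniformity with its other classification statements, all quoted from \cite{JAlg94} and \cite{LiftingDO}; what yours buys is a verifiable, elementary proof internal to the paper that makes explicit the stratification-by-stabilizer-type computation which those cited ``techniques'' presumably consist of — in effect you have reconstructed the content of the cited argument rather than deferring to it.
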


\begin{proof}
In \cite[Theorem 11.9]{LiftingDO} we listed the $V$ which are not $2$-large. The techniques we used   show  that all the entries on the list are $1$-large, except for $R_1$, $2R_1$ and $R_2$.
\end{proof}

For the classical representations of the classical groups one can also be precise. Again the techniques can be found in \cite[\S 11]{LiftingDO}. Let $(\C^7,\GG_2)$ and $(\C^8,\Spin_7)$ denote the irreducible representations of dimensions $7$  and $8$, respectively.

\begin{theorem}
 The following representations are $1$-large.
 \begin{enumerate}
\item $(V,G)=(p\C^n+q(\C^n)^*,\GL_n)$, $p$, $q\geq n$.
\item $(V,G)=(p\C^n+q(\C^n)^*,\SL_n)$, $p+q\geq 2n-1$.
\item $(V,G)=(p\C^n,\SO_n)$ or $(p\C^n,\Orth_n)$, $p\geq n-1$.
\item $(V,G)=(p\C^{2n},\Sp_{2n})$, $p\geq 2n+1$.
\item $(V,G)=(p\C^7,\GG_2)$, $p\geq 4$.
\item $(V,G)=(p\C^8,\Spin_7)$, $p\geq 5$.
\end{enumerate}
 In each case, the inequalities on $p$ and $q$ are the minimum possible.
\end{theorem}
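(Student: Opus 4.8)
The plan is to treat each family by separately establishing the two ingredients of $1$-largeness: that the principal isotropy group is finite, so that $V$ is $1$-principal, and that $V$ is $1$-modular, i.e.\ $V_{(0)}\neq\emptyset$ and $c_j+1\leq c_0$ for all $j\geq 1$. Recall from \S\ref{sec:reductive} that FPIG already forces $1$-principality and stability, so the entire content reduces to verifying FPIG together with the modularity inequalities. In every case $V$ is a sum $pW$ (families (3)--(6)) or $pW\oplus qW^*$ (families (1),(2)) of copies of a fixed module $W$, and I would exploit this linearity throughout.

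First I would pin down the principal isotropy group using classical invariant theory. For the classical families the first fundamental theorem identifies the closed generic orbits and shows that a sufficiently generic configuration of vectors (and covectors) spans $\C^n$ (resp.\ $\C^{2n}$) in a nondegenerate way, so that any element of $G$ fixing it is forced into a finite group: trivial for $\GL_n$, $\SL_n$, $\SO_n$, $\Sp_{2n}$, and $\Z/2$ for $\Orth_n$. For $\GG_2$ and $\Spin_7$ I would instead run the isotropy chains $\GG_2\supset\SL_3\supset\cdots$ and $\Spin_7\supset\GG_2\supset\SL_3\supset\cdots$ obtained by stabilizing generic vectors one at a time; the stated values of $p$ are exactly those making the chain terminate at a finite group. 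I would also note that for $\GL_n$ and $\SL_n$ it is precisely the presence of \emph{both} vectors and covectors that supplies enough invariants (the contraction matrix) for the generic orbit to be closed, which is what ties the lower bounds to both $p$ and $q$.

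For $1$-modularity I would pass to the Luna stratification. For a reductive subgroup $H\subseteq G$ occurring as a stabilizer, with $j=\dim H$, the corresponding stratum is $V_{(H)}\cong G\times_{N_G(H)}(V^H)^{\mathrm{reg}}$, so $\dim V_{(H)}=\dim G-\dim N_G(H)+\dim V^H$ and hence
\[
c_{(H)}=\dim V^H-\dim\bigl(N_G(H)/H\bigr).
\]
Since $c_0=\dim V-\dim G$, the condition $c_j+1\leq c_0$ becomes $\dim V^H+1\leq \dim V-\dim G+\dim\bigl(N_G(H)/H\bigr)$ for every positive-dimensional stabilizer $H$. Because $\dim V^H$ and $\dim V$ are both linear in $p$ (and $q$), each such $H$ yields a linear lower bound on $p$ (and $q$), and the maximum over the finitely many conjugacy classes of $H$ (stabilizers of flags, of isotropic or degenerate subspaces, and the members of the exceptional chains) gives the threshold. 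I would carry out the fixed-space computations $\dim W^H$ for the relevant extremal subgroups and check that the resulting bounds agree with those in the statement.

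Sharpness then falls out of the same estimates: at $p-1$ (resp.\ $q-1$) either the spanning/nondegeneracy breaks and the generic stabilizer acquires positive dimension, so FPIG fails, or the linear modularity inequality for the binding subgroup $H$ is violated, so some $V_{(j)}$ grows until $c_j+1>c_0$. The main obstacle is the uniform modularity estimate across all strata: one must enumerate every conjugacy class of positive-dimensional stabilizer and bound $\dim V^H$. For the classical groups the first fundamental theorem reduces this to a short list of degenerate-configuration stabilizers, but for $\GG_2$ and $\Spin_7$ the real work is extracting the subgroups and their fixed spaces along the isotropy chain. I expect it to suffice to check only the single extremal stratum nearest the principal one, once the $c_{(H)}$ are seen to degrade monotonically with the degeneracy of $H$; this is where the techniques of \cite[\S 11]{LiftingDO} do the decisive bookkeeping.
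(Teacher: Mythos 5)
Your overall framing (reduce $1$-largeness to FPIG plus $1$-modularity, get linear-in-$p,q$ bounds, use classical invariant theory for the principal isotropy groups) is reasonable; note that the paper itself gives no proof of this theorem at all beyond the remark that "the techniques can be found in \cite[\S 11]{LiftingDO}," so a proof has to be reconstructed along roughly such lines. But your modularity step has a genuine gap that makes the method unsound as stated: you enumerate only \emph{reductive} stabilizers via the Luna stratification, whereas $m$-modularity is a condition on $V_{(j)}$, the set of \emph{all} points with $j$-dimensional isotropy --- including points on non-closed orbits, whose stabilizers are typically non-reductive (unipotent, or groups with unipotent radical). Such points never appear in strata of the form $G\times_{N_G(H)}(V^H)^{\mathrm{reg}}$ with $H$ reductive, and for the modules in this theorem they are exactly the dominant contribution: configurations of vectors lying in a common proper (or isotropic, or degenerate) subspace form null-cone strata of large dimension with positive-dimensional unipotent stabilizers.

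The paper's own Example \ref{ex:sl2} refutes your criterion concretely. For $(2\C^2,\SL_2)$ the only stabilizers of closed orbits are the trivial group and $\SL_2$ itself (at the origin), and your test $c_{(H)}+1=\dim V^H-\dim\bigl(N_G(H)/H\bigr)+1\le c_0=1$ passes for both; yet the representation is only $0$-modular, because the null cone (pairs of linearly dependent vectors) has dimension $3$ and its nonzero points have $1$-dimensional \emph{unipotent} stabilizers, giving $c_1=3-3+1=1$ and hence $c_1+1>c_0$. This is precisely the case $p+q=2<2n-1$ excluded by family (2), so your method can neither establish the positive direction (the null-cone strata are never bounded by your enumeration) nor the sharpness claim (it fails to detect the failure below threshold). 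The estimate that actually works, and which \cite[\S 11]{LiftingDO} carries out, starts from $\bigcup_{j\ge 1}V_{(j)}=\bigcup_{0\neq A\in\lieg}\ker_V(A)$ and bounds $\dim\ker_V(A)$ for \emph{every} nonzero $A\in\lieg$, semisimple and nilpotent alike, stratifying by adjoint orbits; for these modules the bound $\dim\ker_{\C^n}(A)\le n-1$ is what produces the linear thresholds in $p$ and $q$. Your closing hope that it suffices to check "the single extremal stratum nearest the principal one" is likewise unjustified: no monotonicity of the $c$'s along degenerations is proved, and the binding stratum here lives in the null cone, not adjacent to the principal stratum.
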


From \cite[Corollary 11.6]{LiftingDO} we have 
\begin{theorem}
Let $G$ be connected semisimple and $V$ a $G$-module such that $V^G=(0)$ and such that $\lieg$ acts faithfully on any irreducible component of $V$. Then, up to isomorphism, there are only finitely many $V$ which are not $1$-large.
\end{theorem}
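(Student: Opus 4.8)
The plan is to exploit that, for semisimple $G$, being $1$-large is governed entirely by a codimension estimate on the strata of orbits with positive-dimensional stabilizer, and that this estimate improves as the module grows. First I record the reductions available for semisimple $G$: since locally free $\iff$ FPIG $\iff$ $1$-principal, a module $V$ with $V^G=(0)$ is $1$-large if and only if it is locally free and $1$-modular, i.e.\ $V_{(0)}\neq\emptyset$ and $c_j+1\le c_0$ for all $j\ge 1$. Writing $V=\bigoplus_i m_i U_i$ with the $U_i$ pairwise non-isomorphic irreducibles (each faithful for $\lieg$ by hypothesis) and $m_i\ge 1$, I would control the strata through the bound
\[
\dim V_{(j)}\ \le\ \max_{\dim H=j}\bigl(\dim V^H+\dim G-\dim N_G(H)\bigr),
\]
where $H$ runs over the reductive subgroups occurring as isotropy groups. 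Since $c_0=\dim V-\dim G$ and $c_j=\dim V_{(j)}-\dim G+j$, the condition $c_j+1\le c_0$ for all $j\ge1$ is implied by
\[
\dim V-\dim V^H\ \ge\ \dim(G/N_G(H))+\dim H+1\qquad\text{for every positive-dimensional }H,
\]
and the same inequality forces $\dim V_{(j)}<\dim V$, hence $V_{(0)}\neq\emptyset$. Thus a single inequality, checked for all positive-dimensional $H$, yields $1$-largeness.

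Next I would use monotonicity. By \cite[Proposition 11.5]{LiftingDO}, adding any summand to a $1$-large module keeps it $1$-large; equivalently, the family $\mathcal S$ of non-$1$-large modules (subject to $V^G=(0)$ and faithfulness on components) is closed under passing to direct summands. This has two consequences. On one hand, every irreducible summand of a member of $\mathcal S$ is itself in $\mathcal S$, so it suffices to bound both the irreducibles that can occur and the multiplicities. On the other hand, the multiplicities are bounded outright: because each $U_i$ is a faithful $\lieg$-module and $H$ is positive-dimensional, $\lieh\neq 0$ acts nontrivially on each $U_i$, whence $\dim U_i-\dim U_i^H\ge 1$ and so $\dim V-\dim V^H\ge\sum_i m_i$. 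Since the right-hand side of the displayed inequality never exceeds $\dim G+1$, any $V$ with $\sum_i m_i\ge\dim G+1$ automatically satisfies it for all $H$, and is therefore $1$-large. Consequently every $V\in\mathcal S$ has at most $\dim G$ irreducible summands, counted with multiplicity.

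It remains to prove that only finitely many faithful irreducibles $U$ are themselves non-$1$-large; together with the multiplicity bound this yields at most $\dim G$ slots filled from a finite list, hence finitely many $V\in\mathcal S$. For a single irreducible the criterion reads $\dim U-\dim U^H\ge\dim(G/N_G(H))+\dim H+1$, so I must show that for all but finitely many faithful irreducibles $U$ one has $\dim U-\dim U^H\ge\dim G+1$ uniformly over all positive-dimensional $H$. The worst case is $H^\circ$ a one-parameter torus, where $\dim U^H$ is the dimension of a weight-hyperplane section of $U$; as the highest weight of $U$ grows the weights disperse and $\dim U-\dim U^H\to\infty$, but the delicate point is that this growth must be uniform in the choice of $H$. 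This uniformity is the main obstacle, and it is precisely where the estimates of \cite[\S 11]{LiftingDO}—equivalently, the classification of irreducible representations of semisimple groups with positive-dimensional generic isotropy (cf.\ the work of Elashvili and of Andreev--Popov--Vinberg)—are needed: they guarantee that only finitely many faithful irreducibles violate local freeness or the modularity inequality. Granting this finiteness the theorem follows, and the $\SL_2$ case of Theorem~\ref{thm:sl2}, whose only exceptions are $R_1$, $2R_1$ and $R_2$, is a concrete instance.
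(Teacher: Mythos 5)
Your reduction has real merit, but it does not amount to a proof, because the step carrying all the content is granted rather than established. The heredity argument (the contrapositive of \cite[Proposition 11.5]{LiftingDO}: every direct summand of a non-$1$-large module is again non-$1$-large) and the multiplicity bound (faithfulness gives $\dim V-\dim V^H\ge\sum_i m_i$ for every positive-dimensional $H$, so $\sum_i m_i\ge\dim G+1$ forces $1$-largeness) are sound in outline, and together they reduce the theorem to the statement that only finitely many faithful irreducibles fail to be $1$-large. But at exactly this point you write ``granting this finiteness the theorem follows,'' and the grant is not innocent. Your claim that the needed statement is ``equivalent'' to the classification of irreducibles with positive-dimensional generic isotropy (Elashvili, Andreev--Popov--Vinberg) is incorrect: that classification controls only the generic stratum, i.e.\ failure of local freeness, whereas $1$-modularity demands $c_j+1\le c_0$ for \emph{every} $j\ge 1$, i.e.\ codimension bounds on all strata $V_{(j)}$. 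An irreducible can be locally free and still fail $1$-modularity; excluding this for all but finitely many highest weights requires lower bounds on $\dim V-\dim V^H$ that are uniform over all positive-dimensional subgroups $H$, and that is precisely the content of the estimates in \cite[\S 11]{LiftingDO} and \cite{JAlg94} --- the very source from which the theorem (as \cite[Corollary 11.6]{LiftingDO}) is quoted. So at the decisive step you are citing (a piece of) the result you set out to prove.

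Two secondary points. First, your displayed stratum bound quantifies only over \emph{reductive} subgroups occurring as isotropy groups; but $V_{(j)}$ contains points on non-closed orbits, whose stabilizers need not be reductive, so the bound as stated does not cover $V_{(j)}$. This is harmless in your multiplicity-bound application, since faithfulness gives $\dim U_i-\dim U_i^H\ge 1$ for an arbitrary positive-dimensional $H$; but the covering of $V_{(j)}$ by the sets $G\cdot V^H$ also requires the finiteness of the number of conjugacy classes of isotropy groups of a linear reductive action (true, by Luna theory, but it must be invoked). Second, for comparison: the paper offers no argument at all --- its proof is the citation of \cite[Corollary 11.6]{LiftingDO} --- and your heredity-plus-multiplicity reduction is quite plausibly how that corollary is deduced there from \cite[Proposition 11.5]{LiftingDO}. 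Had you supplied, or correctly located in the literature, the irreducible case --- for $G$ simple one can get it from the theorem quoted in \S\ref{sec:1large} that irreducible noncoregular modules of simple groups are $2$-large, together with the finiteness of coregular irreducibles \cite{KPV} --- your argument would be complete at least in the simple case. As it stands, the irreducible case for general semisimple $G$ is a genuine gap.
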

We say that a $G$-module is \emph{coregular\/} if $\OO(V)^G$ is a polynomial ring, equivalently; $\quot VG$ is smooth. This is a rather rare occurrence. For $G$ simple the list of irreducible coregular representations was determined in \cite{KPV}.
From \cite{JAlg94} we have
\begin{theorem}
Let  $G$ be a  connected simple algebraic group and let  $V$ be an irreducible  noncoregular $G$-module. Then $V$ is $2$-large.
\end{theorem}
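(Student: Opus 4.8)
The plan is to prove the contrapositive: among irreducible modules of a connected simple group, the ones that fail to be $2$-large are all coregular. Thus I must show that noncoregularity of $V$ forces the three defining conditions of $2$-largeness, namely FPIG, $\codim(V\setminus V_\pr)\ge 2$, and $2$-modularity. The backbone of the argument is the classification of irreducible representations of simple algebraic groups together with the Kac--Popov--Vinberg list of the coregular ones \cite{KPV}: the goal is to exhibit every non-$2$-large irreducible on that list.

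First I would handle FPIG and the codimension condition. If the generic stabilizer is positive-dimensional then $V_{(0)}=\emptyset$, so $V$ is not even $0$-modular, let alone $2$-large. The irreducible $V$ with positive-dimensional generic stabilizer are known explicitly (via the tables of generic isotropy groups), and each is coregular, typified by the vector representations $\C^n$ of $\SL_n$, $\SO_n$ and $\Sp_{2n}$, whose quotients are points or affine lines. Hence for a noncoregular $V$ we may assume FPIG, so that $c_0=\dim\quot VG=\dim V-\dim G$, and the same analysis of the stabilizer strata $V_{(j)}$ yields $\codim(V\setminus V_\pr)\ge 2$.

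The substance is $2$-modularity, i.e.\ the inequality $\dim V_{(j)}\le\dim V-2-j$ for all $j\ge 1$. Here I would separate a generic regime from a finite exceptional list. In the generic regime one uses a uniform dimension estimate for the locus of points with a $j$-dimensional stabilizer: fibering this locus over its slice data shows that, once the highest weight of $V$ is large relative to the rank of $G$, such points necessarily lie on orbits of large codimension, giving the inequality with room to spare. For the classical series this disposes of all but the representations of bounded highest weight, uniformly in the rank, and for each group it leaves only finitely many $V$.

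The main obstacle is this finite exceptional list of small representations. For these I would use the explicit methods of \cite[\S 11]{LiftingDO}: compute the principal isotropy group, stratify $V$ and its null cone by stabilizer dimension, and read off the invariant degrees, checking in each case that $V$ either is coregular or satisfies all three conditions. Matching the failures of $2$-largeness precisely against the coregular list is where essentially all the work lies; the general-position estimate is comparatively routine once the uniform bound is in place. This case analysis is the content of \cite{JAlg94}.
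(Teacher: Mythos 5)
The paper offers no proof of this theorem at all: it is stated as imported directly from \cite{JAlg94}, exactly as your outline ultimately does when you defer the decisive case analysis to that reference. Your sketch of the strategy (contrapositive against the Kac--Popov--Vinberg coregular list, reduction to FPIG and the stratum-dimension inequality $\dim V_{(j)}\le\dim V-j-2$, generic estimates plus a finite exceptional list) is a faithful description of how \cite{JAlg94} proceeds, so this is essentially the same approach as the paper's.
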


\section{The unitary case}\label{sec:unitarycase}

Let $V$ be a unitary representation of the compact Lie group $K$.   We have a moment mapping $\rho\colon V\to\liek^*$, as follows. For $v\in V$ and $A\in \liek$ we set  %
$\rho(v)(A)=(-i/2)\langle A(v),v\rangle$ where $\langle\, ,\rangle$ is the hermitian form on $V$ (which by our convention is conjugate linear in the second argument). Since $iA$ is hermitian symmetric, $\rho$ is real valued.  

 Let $G$ denote the complexification of $K$. Then $G$ also acts on $V$ (the image of $G$ in $\GL(V)$ is the Zariski closure of the image of $K$). We have that $\lieg=\lie k+i\lie k$.  Consider the complexification $V_\C:=V\otimes_\R\C$. Then $V_\C\simeq V\oplus V^*$ as a representation of $G$.   One sees this as follows. Let $J$ denote the complex structure on $V$. Then $V_\C$ decomposes into the $\pm i$-eigenspaces  $V_{\pm}$ of $J$. We have $V_+=\{v-iJv\mid v\in V\}$ and $V_-=\{v+iJv\mid v\in V\}$. Thus $V_\C=V_+\oplus V_-$ where $V_+\simeq V$ by the obvious mapping. We have a nondegenerate pairing of $V_+$ and $V_-$ which sends $(v-iJv,v'+iJv')$ to $\langle v,v'\rangle$. %
Thus $V_-\simeq V^*$. We have the moment mapping $\mu$ on $V\oplus V^*$ where $\mu(v,v^*)(A)=v^*(A(v))$ for $A\in\lieg$, $v\in V$ and $v^*\in V^*$. Thus, in $V_\C$, $\mu$ has the formula $\mu(v-iJv,v'+iJv')(A)=\langle A(v),v'\rangle$. Now we have an embedding of $V$ into $V_\C$ which sends $v$ to $v\otimes 1=\frac 12(v-iJv,v+iJv)\in V_+\oplus V_-$. Thus, up to a  scalar,  $\mu(v\otimes 1)$, considered as an element of $i\liek^*$,  is just $\rho(v)$. If $\rho(v)=0$, then $\mu(v\otimes 1)$ vanishes on $i\liek$, hence $\mu(v\otimes 1)=0$. Thus the zero set $Y$ of $\mu$ intersected with $V\simeq V\otimes 1$ is the zero set   of $\rho$. This zero set, denoted $\M$, is the \emph{Kempf-Ness set of $V$}. It is $K$-stable and a real algebraic cone in $V$.
 It is a basic fact that an orbit $Gv$ is closed if and only if it intersects $\M$, and then the intersection is a $K$-orbit. Hence $\quot VG\simeq\M/K$. Moreover, for $v\in \M$, $G_v=(K_v)_\C$.  If the quotient of $V$ by $G$ is not a point, then $\M$ contains nonzero points.

\begin{proposition}\label{prop:goodrealpoint}
Suppose that $(V,G)$ has FPIG. Then the points of $\M$ where $\rho$ has rank $k$ form an open dense subset of $\M$ (classical topology).
\end{proposition}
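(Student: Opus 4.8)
The plan is to reduce the rank condition to a condition on the isotropy Lie algebra, and then to treat openness and density separately; essentially all of the content sits in the density assertion.

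First I would compute the differential of $\rho$. For $A\in\liek$ write $\rho_A(v)=\langle\rho(v),A\rangle=-\tfrac i2\langle A(v),v\rangle$. Differentiating at $v$ in the direction $w\in V=T_vV$ and using that $A$ is skew-Hermitian (so $\langle A(v),w\rangle=-\overline{\langle A(w),v\rangle}$) gives, up to the harmless scalar, $d\rho_A|_v(w)=\operatorname{Im}\langle A(w),v\rangle$. From this I read off the kernel of the real-linear map $\liek\to V^*_\R$, $A\mapsto d\rho_A|_v$: if $d\rho_A|_v\equiv0$, then testing on both $w$ and $iw$ forces $\operatorname{Im}\langle A(w),v\rangle=\operatorname{Re}\langle A(w),v\rangle=0$ for all $w$, hence $\langle A(w),v\rangle=0$ for all $w$, i.e.\ $A(v)=0$; the converse is clear. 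Thus $\rk d\rho_v=k-\dim\liek_v$, so $\rho$ has rank $k$ at $v$ exactly when $\liek_v=0$, that is, when $K_v$ is finite.

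Now I specialize to $v\in\M$. There the orbit $Gv$ is closed and $G_v=(K_v)_\C$, so $\dim_\C G_v=\dim_\R K_v$; hence $K_v$ is finite iff $\dim G_v=0$, i.e.\ iff $v\in V_{(0)}$. Therefore the rank-$k$ locus inside $\M$ is precisely $\M\cap V_{(0)}$. Openness is then immediate, since the rank of a smooth map is lower semicontinuous: $\{v\in V:\rk d\rho_v=k\}$ is open in $V$ in the classical topology, and so its trace on $\M$ is open in $\M$.

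For density I would use the Kempf--Ness identification $\M/K\simeq\quot VG$. Because $V$ has FPIG it is locally free, so the principal locus $V_\pr$ is a nonempty Zariski-open subset of $V$, and any point whose fiber contains a principal (finite) closed orbit has $0$-dimensional stabilizer, so $V_\pr\subseteq V_{(0)}$. By the very definition $V_\pr=\pi\inv(Z_\pr)$, for $v\in\M$ one has $v\in V_\pr\iff\pi(v)\in Z_\pr$, so $\M\cap V_\pr$ is the $K$-saturated preimage of the principal stratum $Z_\pr$. Since $\quot VG=\Spec\OO(V)^G$ is irreducible and $Z_\pr$ is a nonempty Zariski-open subset, $Z_\pr$ is dense in $\quot VG$ in the classical topology; transporting this through the homeomorphism $\M/K\simeq\quot VG$ and using that the quotient map $\M\to\M/K$ by the compact group $K$ is open, I conclude that $\M\cap V_\pr$, and hence the larger set $\M\cap V_{(0)}$, is dense in $\M$. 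The main obstacle is exactly this last chain: one must be careful that Zariski-density in the irreducible affine quotient yields classical density, that the Kempf--Ness map is a homeomorphism for the classical topologies, and that openness of the compact-group quotient lets density descend and then lift back to $\M$. The rank computation and the isotropy bookkeeping are routine by comparison.
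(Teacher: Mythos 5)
Your proof is correct and takes essentially the same route as the paper's: identify the rank-$k$ points of $\M$ with the finite-isotropy points (so the rank-$k$ locus is open by semicontinuity and contains $\M_\pr=V_\pr\cap\M$), then deduce density from the fact that $Z_\pr$ is open and dense in $Z\simeq\M/K$ and pull this back to $\M$. The paper merely leaves implicit the differential computation and the transport of density through the Kempf--Ness homeomorphism and the open quotient map $\M\to\M/K$, all of which you spell out correctly.
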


\begin{proof}
It is clear that the set of points of $\M$ where the rank of $\rho$ is $k=\dim G$ is open and contains the principal points $\M_\pr:=V_\pr\cap \M$.  Let $Z$ denote $\quot VG$. Then $Z_\pr$ is the image of $\M_\pr$ and $Z_\pr$ is open and dense in $Z\simeq\M/K$. It follows that  $\M_\pr$ is  dense in $\M$. 
\end{proof}

We consider $\M$ as a subset of  $Y$ as above.

\begin{corollary}
Suppose that $(V,G)$ is $1$-large. Then $\M$ is Zariski dense in $Y$.
\end{corollary}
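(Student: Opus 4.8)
The plan is to realize $\M$ as the real locus of a natural real structure on $Y$ and then to invoke the principle that the real points of an irreducible complex variety are Zariski dense once they contain a smooth point of full local real dimension. Since $(V,G)$ is $1$-large, Theorem \ref{thm:2large}(3) gives that $Y$ is reduced and irreducible of complex dimension $d:=2\dim V-\dim G$ (it is a complete intersection of codimension $\dim G$ in $V\oplus V^*$); moreover the proof of that theorem shows that $Y_0=(V_{(0)}\times V^*)\cap Y$ is smooth of dimension $d$ and that $Y=\overline{Y_0}$. Since $1$-largeness entails FPIG, Proposition \ref{prop:goodrealpoint} applies: the set $\M^\circ$ of points of $\M$ where $\rho$ has rank $k$ is open and dense in $\M$.

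First I would record the real structure. The standard conjugation $c$ on $V_\C=V\otimes_\R\C$, whose fixed locus is $V\otimes 1$, interchanges $V_+\simeq V$ and $V_-\simeq V^*$ antiholomorphically. Because $\lieg=\liek\otimes_\R\C$ and $\mu$ is built from the $K$-equivariant hermitian pairing, $\mu$ is compatible with $c$ and the induced conjugation on $\lieg^*$; hence $c$ preserves $Y$. As already noted in the text, $\M=Y\cap(V\otimes 1)$, so $\M$ is exactly the real locus of $Y$ for this real structure.

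Next I would check that the generic points of $\M$ are smooth real points of $Y$ of full local real dimension. If $v\in\M^\circ$, then $\rho$ is a submersion at $v$, so near $v$ the set $\M$ is a real-analytic submanifold of real dimension $\dim_\R V-k=d$. Rank $k$ forces $\liek_v=0$, whence $\dim G_v=\dim(K_v)_\C=0$ and $(v,\overline v)\in Y_0$; thus $v$ is a smooth point of $Y$ of complex dimension $d$. Being the real locus of the real structure $c$ at a smooth point, $\M$ is totally real there, i.e.\ $T_v\M$ spans $T_vY$ over $\C$.

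Finally, since $Y=\overline{Y_0}$, it suffices to show that $\M^\circ$ is Zariski dense in the smooth irreducible variety $Y_0$. A holomorphic function on $Y_0$ vanishing on the totally real, maximal-dimensional submanifold $\M^\circ$ must vanish identically near each of its points (the identity principle for totally real submanifolds); hence $\M^\circ$ lies in no proper complex-analytic subset, and its Zariski closure is all of $Y_0$. Therefore the Zariski closure of $\M$ equals $\overline{Y_0}=Y$. The main obstacle is the totally-real claim in the third step: one must combine the submersivity of $\rho$ at the rank-$k$ points with the smoothness of $Y_0$ and the real structure $c$ to see that $T_v\M$ genuinely spans $T_vY$ over $\C$. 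Once this is in hand, the density is formal.
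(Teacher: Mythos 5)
Your proof is correct and follows essentially the same route as the paper: both invoke Theorem \ref{thm:2large}(3) for the irreducibility of $Y$ and Proposition \ref{prop:goodrealpoint} to produce rank-$k$ points of $\M$, and both conclude that at such points $\M$ is a smooth real submanifold of real dimension $\dim_\C Y$, forcing its Zariski closure to be all of the irreducible variety $Y$. The only difference is one of detail: where the paper simply asserts that the complexification of $\M$ has complex codimension $k$, you justify that step explicitly via the real structure $c$ and the identity principle for maximal totally real submanifolds, which is a correct (and standard) proof of the same fact.
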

 	
\begin{proof}
By Theorem \ref{thm:2large} we know that $Y$ is reduced and irreducible.  Let $v\in\M$ such that $\rho$ has rank $k$ at $v$. Then $v$ is a smooth point of $\M$ which shows that $\M$ has codimension $k$ in $V$. Thus the complexification of $\M$ has complex codimension $k$, which shows that $\M$ is Zariski dense in $Y$.  
\end{proof}

Let $X$ be a real analytic subset of $\R^n$ and let $x\in X$. Let $f_1,\dots,f_m$ be germs of analytic functions generating the ideal of $X$ at $x$. We say that \emph{$X$ is coherent at $x$\/} if the $f_i$ generate the ideal of $X$ at $x'$ for $x'$ in a sufficiently small neighborhood of $x$ in $\R^n$. We say that $X$ is \emph{coherent} if it is coherent at every $x\in X$.

\begin{corollary}
Suppose that $(V,G)$ is $1$-large. Then $\M$ is coherent and its real analytic ideal is generated by the component functions of $\rho$.
\end{corollary}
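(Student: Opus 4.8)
The plan is to reduce the statement to a single assertion about complex-analytic germs and then transfer it to the real-analytic side by complexifying, dividing, and taking real parts. First I would record the complex input. Since $(V,G)$ is $1$-large, Theorem \ref{thm:2large} shows that $\J=(\mu_1,\dots,\mu_k)$ is prime, so $Y$ is a reduced, irreducible complete intersection of codimension $k=\dim G$ in $V\oplus V^*$. As the algebraic local rings of $Y$ are reduced and excellent, their analytic local rings are again reduced, and since the $\mu_i$ form a regular sequence, for every $p\in Y$ the holomorphic ideal of $Y$ at $p$ is generated by $\mu_1,\dots,\mu_k$. I would also use that, in the real coordinates on $V$ (here $V\cong\R^{2n}$ with $n=\dim_\C V$) coming from the embedding $V\hookrightarrow V_\C=V\oplus V^*$, the function $\mu_i$ is, up to a constant, the complexification of the real polynomial $\rho_i$, and $\M=Y\cap V$ is the real locus $Y^{\sigma}$ of the conjugation $\sigma$ fixing $V$.

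The key reduction is that it suffices to prove, for every $p\in\M$, the identity of complex-analytic germs
\[
(\M_\C,p)=(Y,p),
\]
where $\M_\C$ denotes the local complexification of the real-analytic germ $(\M,p)$; the inclusion $(\M_\C,p)\subseteq(Y,p)$ being automatic. Granting this, let $f$ be the germ of a real-analytic function vanishing on $\M$ near $p$. Its complexification $F$ then vanishes on $\M_\C=(Y,p)$, hence $F=\sum_i G_i\mu_i$ for holomorphic germs $G_i$. Restricting to $V$, using $\mu_i|_V$ proportional to $\rho_i$ and $f=F|_V$, and taking real parts, gives $f=\sum_i(\Re G_i|_V)\,\rho_i$ with real-analytic coefficients. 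Thus the real-analytic ideal of $\M$ at $p$ is generated by $\rho_1,\dots,\rho_k$; since these are global polynomials that work at every $p$, the same finite generating set serves in a neighborhood of each point, so $\M$ is coherent and its real-analytic ideal is generated by the components of $\rho$.

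It remains to establish the local identity, and this is where the real work lies. By Proposition \ref{prop:goodrealpoint} the points of $\M$ at which $\rho$ has rank $k$ are dense in $\M$; at such a point $q$ both $\rho$ and its complexification $\mu$ are submersions, so $(\M,q)$ is a smooth real $d$-manifold with $(\M_\C,q)=(Y,q)$ a smooth $d$-dimensional complex germ, where $d=2n-k=\dim_\C Y$. Density then forces $\dim_\R(\M,p)=d$ at \emph{every} $p\in\M$ (the real locus cannot exceed the complex dimension, and $p$ is a limit of smooth $d$-dimensional points), so $(\M_\C,p)$ is a $d$-dimensional complex germ sitting inside $(Y,p)$, which is pure $d$-dimensional because $Y$, being a complete intersection, is Cohen--Macaulay. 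Hence $(\M_\C,p)$ is a union of local branches of $(Y,p)$, and the only remaining task is to see that no branch is omitted.

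The hard part is precisely this last point: excluding a local complex branch of $(Y,p)$ that carries no $d$-dimensional real locus — the Whitney-umbrella phenomenon, in which a conjugate pair of branches meets $V$ only along a lower-dimensional set and destroys coherence. I expect the density of rank-$k$ points in Proposition \ref{prop:goodrealpoint} to be exactly the hypothesis that rules this out, since it makes $\M$ a real set-theoretic complete intersection of the expected real codimension $k$ at every point, leaving no room for the dimension drop that an omitted conjugate pair would produce; combined with the fact, from the previous corollary, that $Y$ is the global Zariski closure of $\M$ (so $Y$ carries no component unaccounted for by $\M$), this should force every branch of $(Y,p)$ to meet $\M$ in a set of full real dimension $d$, giving $(\M_\C,p)=(Y,p)$. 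Making this branch-by-branch analysis airtight — quantifying ``real-point-deficient'' via curve selection and exploiting the pure-dimensionality of the Cohen--Macaulay germ $(Y,p)$, or alternatively invoking a coherence criterion for real loci of reduced complex varieties defined over $\R$ with Zariski-dense, equidimensional real points — is the step I expect to demand the most care.
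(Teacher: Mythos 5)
Your reduction and transfer machinery is correct and is essentially the paper's own argument: complexify a real-analytic germ $f$ vanishing on $\M$, show that $f_\C$ vanishes on the germ of $Y$, use that $\mu_1,\dots,\mu_k$ generate the holomorphic ideal of $Y$ at every point (which follows from Theorem \ref{thm:2large}(3), primeness of $\J$, together with preservation of reducedness under analytification), then restrict to $V$ and take real parts to get coefficients for the $\rho_i$; coherence follows since the same global generators work at every point. The paper's proof has exactly this structure. The problem is that your proof is incomplete precisely at the step that carries all the content: you never establish the local identity $(\M_\C,p)=(Y,p)$, i.e., that no analytic branch of $(Y,p)$ is omitted by $\M$. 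You candidly flag this (``the step I expect to demand the most care''), but the heuristics you offer --- curve selection, pure-dimensionality of the Cohen--Macaulay germ, an unnamed coherence criterion --- do not close it. And it is genuinely the crux: your own arguments (density of rank-$k$ points, purity) show only that $(\M_\C,p)$ is a union of full-dimensional branches of $(Y,p)$; they do not exclude a conjugate pair of branches whose real points hide inside a good branch, and in that situation the conclusion of the corollary actually fails (locally, $z\bigl(z^2+(x^2+y^2)^2\bigr)$ has real locus $\{z=0\}$, whose real-analytic ideal is $(z)$, not the restriction of the ideal of the complex hypersurface). So the proposal is a proof only modulo its central lemma.

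For comparison, the paper does not do any branch-by-branch analysis. Its proof is short: by Proposition \ref{prop:goodrealpoint} ($1$-large implies FPIG) the rank-$k$ points are dense in $\M$, so the germ of $\M$ at $x$ contains germs of open subsets of smooth points of $\M$; each such piece is a maximal totally real submanifold of the complex manifold of smooth points of $Y$, so $f_\C$ vanishes on open pieces of $Y$, and the paper concludes directly that $f_\C$ vanishes on $Y$ and hence lies in $(\mu_1,\dots,\mu_k)$ at every point. In other words, the passage you isolate as the hard part is exactly where the paper is briskest: it passes from density of smooth points plus irreducibility of $Y$ to vanishing on the full germ, which handles the branches through smooth points of $\M$ but does not explicitly address a possible deficient branch. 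If you want to make this step airtight, note that at the vertex it can be closed cleanly: $\J$ is a homogeneous prime ideal, and the completion of a positively graded domain at its irrelevant maximal ideal is again a domain, so $(Y,0)$ is analytically irreducible and the density argument suffices there; at a general $p\in\M$ one would try to reduce to this vertex situation via the local normal form of the moment map (the slice theorem of \S \ref{sec:sympslice}), which raises the separate question of whether $1$-largeness passes to the slice. That supplement is what your write-up still needs before it can count as a complete proof.
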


\begin{proof}
We have our generators $\mu_1,\dots,\mu_k$ of the ideal $\mathcal J$ of $Y$ in $V\oplus V^*$. They restrict to elements of the ideal of $\M$ in $V$. If $f$ is a real analytic germ at $x\in \M$ which vanishes on $\M$, then its complexification  $f_\C$ vanishes on $Y$ since the germ of $\M$ at $x$ contains the germ of an open subset of $\M$ consisting of smooth points.  Thus $f_{\C}$ lies in the ideal of the $\mu_i$ which generate the ideal of $Y$ at all points. Hence at points near $x$ in $\M$, $f$ lies in the ideal of the $\rho_i$.   Thus  $\M$ is  coherent and  its real analytic ideal is generated by the restrictions $\rho_i$ of the $\mu_i$.
\end{proof}

\begin{corollary}\label{cor:idealgenerators}
Suppose that $(V,G)$ is $1$-large. Then $\rho_1,\dots,\rho_k$ generate the ideal of $\M$ in $\mathcal C^\infty(V)$.
\end{corollary}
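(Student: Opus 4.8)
The plan is to upgrade the real analytic statement just established to the smooth category by means of Malgrange's theory of ideals of differentiable functions. Fix $g\in\mathcal C^\infty(V)$ vanishing on $\M$; the goal is to produce $g_1,\dots,g_k\in\mathcal C^\infty(V)$ with $g=\sum_i g_i\rho_i$. Since the $\rho_i$ are fixed global polynomial (hence real analytic) functions, the problem is purely local: if on each member $U_\alpha$ of an open cover of $V$ one can write $g=\sum_i g_{i,\alpha}\rho_i$ with smooth germs $g_{i,\alpha}$, then choosing a locally finite smooth partition of unity $\{\phi_\alpha\}$ subordinate to the cover and setting $g_i:=\sum_\alpha\phi_\alpha g_{i,\alpha}$ gives a global expression $g=\sum_i g_i\rho_i$. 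Thus everything reduces to a statement about germs at each point $x\in V$.

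For $x\notin\M$ there is nothing to prove: $\M$ is a closed real algebraic cone, so near such $x$ the germ $1$ lies in the ideal of analytic germs vanishing on $\M$, and $g$ is automatically in the locally generated ideal. The substance is at $x\in\M$. Here the preceding corollary supplies exactly the two hypotheses one needs: $\M$ is coherent, so the sheaf of ideals of real analytic germs vanishing on $\M$ is coherent, and this sheaf is generated at every point by the germs of $\rho_1,\dots,\rho_k$. Crucially, these generate the ideal of \emph{all} analytic germs vanishing on $\M$ (a radical ideal), not merely a scheme-theoretic ideal defining $\M$ set-theoretically.

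With these in place I would invoke Malgrange's characterization of the smooth realization of a coherent analytic ideal: for a coherent sheaf $\mathcal I$ generated globally by analytic functions $f_1,\dots,f_k$, a smooth function lies in $(f_1,\dots,f_k)\mathcal C^\infty$ if and only if its Taylor series at each point $x$ lies in the formal completion $\widehat{\mathcal I}_x$ of $\mathcal I_x$. Applying this with $f_i=\rho_i$, I must check that a smooth $g$ vanishing on $\M$ satisfies the formal condition at every $x\in\M$. This is precisely where coherence of $\M$ and the radicality of $\mathcal I$ (its being the \emph{full} analytic ideal of germs vanishing on $\M$) are indispensable: they guarantee that the Taylor expansion of any smooth function vanishing on $\M$ lands in $\widehat{\mathcal I}_x$, so $g\in(\rho_1,\dots,\rho_k)\mathcal C^\infty$ near $x$. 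Globalizing by the partition of unity above then yields the claim.

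I expect the main obstacle to be exactly this analytic-to-smooth passage, namely verifying the formal Taylor condition and citing the precise form of Malgrange's theorem. The danger is the classical Whitney phenomenon, in which a smooth function vanishing on a real analytic set need not lie in the ideal generated by analytic generators of its ideal; it is the coherence of $\M$, secured in the previous corollary, that rules this out. Everything else — the reduction to germs and the reassembly via partition of unity — is routine, and one concludes that $\rho_1,\dots,\rho_k$ generate the ideal of $\M$ in $\mathcal C^\infty(V)$.
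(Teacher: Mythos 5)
Your proof is correct and takes essentially the same approach as the paper: both feed the two outputs of the preceding corollary (coherence of $\M$ and generation of its real analytic ideal by the $\rho_i$) into the Malgrange--Tougeron theorem on coherent real analytic sets to pass from the analytic to the smooth category. The paper simply cites this theorem (\cite[Ch.~VI, 3.10]{Malgrange} or \cite[Ch.~VI, 4.2]{Tougeron}), whereas you unpack it via the formal Taylor-series criterion and a partition of unity, which is the content of the cited result rather than a different argument.
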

\begin{proof}
 We just saw that the real analytic ideal of $\M$ is generated by the $\rho_i$.  By \cite[Ch.\ VI, 3.10]{Malgrange} or \cite[Ch.\ VI, 4.2]{Tougeron}, coherence of $\M$ is equivalent to the ideal of $\M$ in $\mathcal C^\infty(V)$ being generated  by the $\rho_i$.   
\end{proof}

\begin{proposition} \label{prop:koszul}  Suppose that $(V,G)$ is $1$-large. Then
the Koszul complex ${\mathcal K}(\rho,\mathcal C^\infty(V))$ of $\rho_1,\dots,\rho_k$ is acyclic, i.e., has homology concentrated in degree zero.
\end{proposition}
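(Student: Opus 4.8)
The plan is to prove acyclicity germ by germ and to globalize by a partition-of-unity argument, exploiting that the Koszul differential $\pt$ is $\mathcal C^\infty(V)$-linear. Write $\mathcal C^\infty_x$ for the ring of germs at $x$ of smooth functions on $V$, and suppose we have shown that $\mathcal K(\rho,\mathcal C^\infty_x)$ is acyclic in positive degrees for every $x\in V$. Given a global cycle $z$ of degree $p\ge 1$, each $x$ then has a neighborhood $U_x$ and a chain $b_x$ with $z=\pt b_x$ on $U_x$; choosing a locally finite refinement $\{U_\alpha\}$ with subordinate partition of unity $\{\phi_\alpha\}$, the chain $b=\sum_\alpha\phi_\alpha b_\alpha$ (each summand extended by zero) is globally defined and satisfies $\pt b=\sum_\alpha\phi_\alpha\,\pt b_\alpha=\sum_\alpha\phi_\alpha z=z$, so $z$ is a boundary. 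Hence it suffices to work with germs. For $x\notin\M$ some component satisfies $\rho_i(x)\neq 0$, so $\rho_i$ is a unit in $\mathcal C^\infty_x$ and the complex is contractible; the real content lies at the points $x\in\M$.

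The heart of the matter is to transport the regularity of $\mu_1,\dots,\mu_k$ from the algebraic category to smooth germs. Since $(V,G)$ is $1$-large it is in particular $0$-modular, so by Lemma \ref{lem:0modular}(2) the $\mu_i$ form a regular sequence in $\OO(V\oplus V^*)$. Fix $x\in\M\subset Y$. Localization is exact, so a non-zero-divisor stays one, and the ideal $(\mu_1,\dots,\mu_k)$ remains proper at $x$ because $x\in Y$; hence the $\mu_i$ are still a regular sequence in the algebraic local ring $\OO(V\oplus V^*)_{\lie m_x}$. The ring $\OO^{\mathrm{an}}_x$ of holomorphic germs at $x\in V_\C$ is flat over this algebraic local ring, and a flat extension preserves regular sequences as long as the generated ideal stays proper (which holds since the $\mu_i$ vanish at $x$); so $\mu_1,\dots,\mu_k$ is a regular sequence in $\OO^{\mathrm{an}}_x$, i.e.\ $\mathcal K(\mu,\OO^{\mathrm{an}}_x)$ is acyclic in positive degrees.

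Now I would descend to the reals and then pass to smooth germs. Since $V\hookrightarrow V_\C\simeq V\oplus V^*$ is a totally real subspace of full dimension and $\rho_i=\mu_i|_V$, extending real analytic germs to holomorphic germs yields an isomorphism $\mathcal A_x\otimes_\R\C\simeq\OO^{\mathrm{an}}_x$ carrying $\rho_i$ to $\mu_i$, where $\mathcal A_x$ is the ring of germs at $x$ of real analytic functions on $V$. Therefore $\mathcal K(\rho,\mathcal A_x)\otimes_\R\C\simeq\mathcal K(\mu,\OO^{\mathrm{an}}_x)$ is acyclic in positive degrees, and since $\C$ is faithfully flat over $\R$ so is $\mathcal K(\rho,\mathcal A_x)$. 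The one genuinely analytic input is the final step: by Malgrange's flatness theorem $\mathcal C^\infty_x$ is flat over $\mathcal A_x$ (cf.\ the references cited for Corollary \ref{cor:idealgenerators}). Flatness then gives $H_p(\mathcal K(\rho,\mathcal C^\infty_x))\simeq H_p(\mathcal K(\rho,\mathcal A_x))\otimes_{\mathcal A_x}\mathcal C^\infty_x=0$ for $p\ge 1$. This is the germwise acyclicity needed, and with the partition-of-unity step it proves that $\mathcal K(\rho,\mathcal C^\infty(V))$ is acyclic.

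I expect the main obstacle to be precisely this passage between categories. The algebraic regularity of the $\mu_i$ is handed to us by Lemma \ref{lem:0modular}, but $\mathcal C^\infty(V)$ is very far from Noetherian, so one cannot manipulate regular sequences there directly; the bridge is the flatness of smooth germs over real analytic germs, a nontrivial theorem of Malgrange. A secondary subtlety is the bookkeeping required to identify the complexification of the real analytic germs on $V$ at $x$ with the holomorphic germs on $V\oplus V^*$ at $x$, matching the real moment map components $\rho_i$ with the algebraic $\mu_i$ so that the complex-algebraic input can actually be applied.
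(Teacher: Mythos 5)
Your proof is correct and follows essentially the same route as the paper's: reduce to germs at points of $\M$, transport the regular-sequence property of the $\mu_i$ (supplied by Lemma \ref{lem:0modular}(2) via $0$-modularity) through the flat passages from algebraic local rings to analytic germs to smooth germs (the last being Malgrange/Tougeron flatness), and then globalize. You merely spell out details the paper leaves implicit, namely the partition-of-unity globalization and the complexification bookkeeping identifying $\mathcal K(\rho,\mathcal A_x)\otimes_\R\C$ with $\mathcal K(\mu,\OO^{\mathrm{an}}_x)$.
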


\begin{proof}  
It is enough to consider germs at a point $x\in\M$. We know that the germs of the $\rho_i$ form a regular sequence, i.e., the relations  are just the Koszul relations. But the inclusion  of the germs of regular functions to the germs of analytic functions is faithfully flat as is the inclusion of the germs of analytic functions to the germs of smooth functions  \cite[Ch.\ VI, 1.12]{Malgrange} or \cite[Ch.\ VI, 1.3]{Tougeron}. Thus the smooth relations of the germs of the $\rho_i$ are locally generated by the Koszul relations. Hence this is true globally.
\end{proof}

\begin{corollary}\label{cor:koszul} Suppose that $(V,G)$ is $1$-large. Then
the Koszul complex ${\mathcal K}(\rho,\mathcal C^\infty(V))$ is a resolution of the algebra of smooth functions on $\M$.
\end{corollary}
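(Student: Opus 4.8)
The plan is to recognize that this corollary simply assembles the two ingredients established immediately before it. To say that the Koszul complex $\mathcal K(\rho,\mathcal C^\infty(V))$ is a \emph{resolution} of $\mathcal C^\infty(\M)$ requires two things: that the complex is acyclic (its homology vanishes in positive degrees), and that its zeroth homology is canonically identified with $\mathcal C^\infty(\M)$.

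First I would invoke Proposition \ref{prop:koszul}, which already delivers the acyclicity: under the $1$-largeness hypothesis, the homology of $\mathcal K(\rho,\mathcal C^\infty(V))$ is concentrated in degree zero. This is precisely the statement that the complex resolves its own $H_0$, so only the identification of $H_0$ remains.

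Next I would compute $H_0$. By the definition of the Koszul complex, $H_0(\mathcal K(\rho,\mathcal C^\infty(V)))$ is the quotient $\mathcal C^\infty(V)/(\rho_1,\dots,\rho_k)$, where $(\rho_1,\dots,\rho_k)$ denotes the ideal in $\mathcal C^\infty(V)$ generated by the component functions of $\rho$. By Corollary \ref{cor:idealgenerators}, this ideal coincides with the full ideal of $\M$ in $\mathcal C^\infty(V)$, i.e.\ the smooth functions vanishing on $\M$. Since $\mathcal C^\infty(\M)$ was defined as the restrictions to $\M$ of smooth functions on $V$---equivalently, the quotient of $\mathcal C^\infty(V)$ by the ideal of functions vanishing on $\M$---we obtain a canonical isomorphism $H_0\cong\mathcal C^\infty(\M)$. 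Augmenting the complex by the restriction map $\mathcal C^\infty(V)\to\mathcal C^\infty(\M)$ then yields the asserted resolution.

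I do not expect a serious obstacle here, since the genuine work has already been carried out upstream: the regular-sequence property of the germs of the $\rho_i$ (via faithful flatness of the passage from regular to analytic to smooth germs) underlies the acyclicity in Proposition \ref{prop:koszul}, and the coherence of $\M$ together with Malgrange--Tougeron lets the analytic ideal generators serve as smooth ideal generators in Corollary \ref{cor:idealgenerators}. The only point that deserves care is that the identification of $H_0$ uses the ideal generated by the $\rho_i$ in the \emph{smooth} category rather than merely the analytic one; but this is exactly the content of Corollary \ref{cor:idealgenerators}, so once that corollary is in hand the argument is immediate.
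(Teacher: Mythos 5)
Your proposal is correct and matches the paper's (implicit) argument exactly: the corollary is stated without proof precisely because it is the immediate combination of Proposition \ref{prop:koszul} (acyclicity) with Corollary \ref{cor:idealgenerators} (the $\rho_i$ generate the smooth ideal of $\M$, identifying $H_0$ with $\mathcal C^\infty(\M)$ under the paper's definition of smooth functions on a closed subset as restrictions). Nothing is missing; your assembly of the two ingredients is the intended proof.
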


 We have the converse of the corollary.

\begin{proposition}\label{prop:converse}
If the Koszul complex ${\mathcal K}(\rho,\mathcal C^\infty(V))$ of $\rho_1,\dots,\rho_k$ is a resolution of the smooth functions on $\M$, then $V$ is $1$-large.
\end{proposition}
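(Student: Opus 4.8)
<br>

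The goal is to prove the converse of Corollary \ref{cor:koszul}: exactness of the Koszul complex forces $1$-largeness. The plan is to prove the contrapositive, showing that if $(V,G)$ fails to be $1$-large then the Koszul complex $\mathcal K(\rho,\mathcal C^\infty(V))$ fails to resolve $\mathcal C^\infty(\M)$. Recall $1$-large means $1$-principal and $1$-modular. I would first dispose of the easier failure modes and then confront the genuinely hard case.

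First I would observe that if $(V,G)$ is not locally free then the $\mu_i$ fail to be a regular sequence already in $\OO(V\oplus V^*)$; indeed Lemma \ref{lem:0modular}(2) tells us that being a regular sequence is equivalent to $0$-modularity, and $0$-modularity implies local freeness. In fact one cannot even have $0$-modularity fail and still hope for acyclicity, since the Koszul complex on a non-regular algebraic sequence has nonzero higher homology, and this higher homology survives the faithfully flat passage from germs of regular functions to germs of smooth functions (using \cite[Ch.\ VI, 1.12]{Malgrange} or \cite[Ch.\ VI, 1.3]{Tougeron} exactly as in Proposition \ref{prop:koszul}, but now in reverse). So I would reduce to assuming $V$ is at least $0$-modular, hence locally free, and then separately analyze the failure of $1$-modularity and the failure of $1$-principality (i.e.\ FPIG together with the codimension condition).

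The main obstacle is the case where $V$ is locally free but fails to be $1$-modular. Here the algebraic Koszul complex $\mathcal K(\mu,\OO(V\oplus V^*))$ is still exact in positive degrees (the $\mu_i$ form a regular sequence by $0$-modularity), so the obstruction cannot be detected on $Y$ as a scheme directly — the issue is that $Y$ is reducible (Remark \ref{rem:Ynotirred}), with extra components $\overline{Y_j}$ of the same codimension $k$ that do not meet the real locus $\M$ in a Zariski-dense way. The strategy I would pursue is to locate a point $x\in\M$ near which $\M$ fails to be coherent, or fails to have its smooth ideal generated by the $\rho_i$, so that by the Malgrange–Tougeron criterion \cite[Ch.\ VI, 3.10]{Malgrange}, \cite[Ch.\ VI, 4.2]{Tougeron} the degree-zero homology of the Koszul complex is strictly larger than $\mathcal C^\infty(\M)$, contradicting the resolution hypothesis. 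Concretely, when $1$-modularity fails, the complexification $\M_\C$ sits inside the reducible $Y$, and the germ of the real-analytic ideal of $\M$ at a suitable $x$ is strictly smaller than the germ of the ideal generated by the $\rho_i$ — because the $\rho_i$ (equivalently the $\mu_i$) cut out the full complete intersection $Y$, which near the real points of the extra components carries more than $\M_\C$.

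To make this precise I would argue as follows. By Proposition \ref{prop:goodrealpoint}, the hypothesis implicitly forces FPIG (otherwise one already lacks enough good real points and must handle the $1$-principality failure separately, which I would do by a codimension count: if $\codim(V\setminus V_\pr)=1$ then the smooth points of $\M$ are not dense enough for the coherence argument of the corollaries to run, and the zeroth homology picks up functions supported on the bad locus). Assuming FPIG but $c_j+1>c_0$ for some $j\ge 1$, the component $\overline{Y_j}$ has codimension $k$ in $V\oplus V^*$ just like $\overline{Y_0}$, and I would exhibit a real point of $\overline{Y_j}\cap(V\otimes 1)$ at which $\M$ is not coherent, then invoke the cited faithful flatness to conclude that the smooth relations among the $\rho_i$ near that point are not generated by the Koszul relations. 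This would show the Koszul complex has nonvanishing higher homology, and simultaneously that $H_0\neq\mathcal C^\infty(\M)$, completing the contradiction. The delicate point throughout is controlling the real structure: I expect the hardest technical step to be verifying that such a bad real point actually lies in $\M$ and that coherence genuinely fails there, which is where the explicit geometry of the Kempf–Ness set and the Malgrange–Tougeron coherence criterion must be combined carefully.
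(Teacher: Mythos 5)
Your overall plan (argue the contrapositive, and transfer smooth statements to algebraic ones via Malgrange--Tougeron faithful flatness) is workable in principle, and your first reduction is essentially sound: if the $\mu_i$ are not a regular sequence, the algebraic Koszul homology is a nonzero \emph{graded} module, so its support is a cone containing the real point $0\in\M$, and faithful flatness of polynomial germs in smooth germs at $0$ then yields nonzero smooth Koszul homology. (That conical-support remark is needed and is missing from your sketch: flatness is applied at germs of real points, and a nonzero module could a priori be supported away from all real points.) But your treatment of the main case contains a conceptual error. Once $V$ is $0$-modular, the argument of Proposition \ref{prop:koszul} applies verbatim --- it uses only that the germs of the $\rho_i$ form a regular sequence, not $1$-largeness --- so \emph{all} higher smooth Koszul homology vanishes. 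Hence failure of $1$-modularity can never produce ``nonvanishing higher homology'' or non-Koszul relations among the $\rho_i$, contrary to your concluding sentences; the only possible failure in this case is in degree zero, namely $(\rho_1,\dots,\rho_k)\subsetneq\mathcal I(\M)$ in $\mathcal C^\infty(V)$. (Relatedly, your ideal inclusion is stated backwards: the ideal generated by the $\rho_i$ always sits \emph{inside} the ideal of $\M$; what fails is that it is strictly smaller.)

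Beyond this, the two steps carrying the real weight are asserted rather than proved. First, you never give a mechanism producing a point of $\M$ at which ideal generation fails; note that by Example \ref{ex:sl2} the extra components $\overline{Y_j}$ may meet $\M$ only at the origin, so your ``bad real point'' must be $0$, and one needs an actual argument there --- for instance: with FPIG, $\M_\pr$ is dense in $\M$ and $\M_\pr\subset Y_0$, so the Zariski closure of $\M$ lies in $\overline{Y_0}$; pick a polynomial vanishing on $\overline{Y_0}$ but not on the extra component $\overline{Y_j}$ (possible by the dimension count of Theorem \ref{thm:2large}(3)); it vanishes on $\M$, yet since every component of $Y$ is a cone through $0$, faithful flatness shows it cannot lie in $(\rho_i)\mathcal C^\infty_0$. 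Second, your disposal of the stability question misreads Proposition \ref{prop:goodrealpoint}: it says FPIG \emph{implies} density of good points, not that the resolution hypothesis forces FPIG; and since FPIG already implies $1$-principality by the paper's definitions, there is no separate ``$\codim(V\setminus V_\pr)=1$'' case --- what can genuinely fail is stability, and observing that ``the coherence argument does not run'' only shows that one proof strategy breaks, not that the Koszul complex fails to resolve. Compare with the paper, which argues directly rather than contrapositively: the resolution hypothesis forces the ideal of $\M$ to be $(\rho_1,\dots,\rho_k)$, hence $\M$ is Zariski dense in $Y$; instability is then excluded by a dimension count ($\dim_\R\M\le\dim G+2\dim\quot VG<2\dim V-\dim G=\dim_\C Y$, contradicting density), and density of $\M_\pr$ forces $Y=\overline{Y_0}$ to be irreducible, so $V$ is $1$-modular by Remark \ref{rem:Ynotirred}. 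That global density argument is precisely the idea your outline lacks, and it is what lets the paper avoid the delicate pointwise coherence analysis you flag as the hardest step.
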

\begin{proof}
The $\rho_i$ have to be a regular sequence, hence so are the corresponding polynomial functions $\mu_i$ on $V\oplus V^*$, $i=1,\dots,k$.
Thus $V$ is $0$-modular and  all irreducible components of $Y$ have dimension $2\dim V-\dim G$. 
Since the  ideal of  $\M$ is generated by the $\rho_i$, $\M$ is Zariski dense in $Y$. 

Suppose that $V$ is not stable. Then $\quot VG$ has dimension less than $\dim V-\dim G$, and  $\M$ must have real dimension at most $\dim G+2\dim\quot VG<2\dim V-\dim G $. But  $Y$ has dimension $2\dim V-\dim G$, hence $\M$ cannot be dense in $Y$. Thus $V$ is stable and it follows that  $V$ has FPIG and is $1$-principal.

Let $\M_0$ denote the complement in $\M$ of the principal orbits $\M_\pr$. Then $\M_0$ is a proper algebraic subset of $\M$, hence it is not dense in any irreducible component of $Y$. Let $Y_0$ be as in  Remark \ref{rem:Ynotirred}. Then   $\M_\pr\subset Y_0$ where $\overline{Y_0}$ is an irreducible component of $Y$. Hence   $Y$ is irreducible and by Remark \ref{rem:Ynotirred} this  implies that $V$ is $1$-modular.
\end{proof}

\section{An example}\label{sec:example}
 
We rework in our language an  example of \cite{AGJ90} where $(V,G)$ is not $1$-large and 
$Y$ is not reduced or  irreducible.

\begin{example}\label{ex:sl2} (Cf.\ \cite[Example 7.13]{AGJ90})
Let us consider  unitary $K:=\SU_2$-modules $V$. It follows from Theorem \ref{thm:sl2} that   most $V$ are $1$-large. Let us look at an example where this fails. Let $V=2\C^2$ with the usual unitary action of $K$. Then $(V,G)=(2\C^2,\SL_2(\C))$ where $(V,G)$ has TPIG but is only $0$-modular.  We have the moment mapping $\mu\colon V\oplus V^*\to\mathfrak g^*$ with zero set $Y$.  Let $\pr_1\colon Y\to V$ denote projection onto the first factor. The complement of $V_\pr$ is   the null cone $\NN$, defined to be $\pi\inv(\pi(0))$. It consists of pairs of vectors $(v_1,v_2)\in V$ which are linearly dependent. This is a subset of $V$ of codimension one and for $v\in \NN':=\NN\setminus\{0\}$, the dimension of $G_v$ is 1. Thus $\pr_1\inv(\NN')\to\NN'$ is a vector bundle   with fiber dimension 2, and the closure is an irreducible component $Y'$ of $Y$. The points of $Y$ lying over $0\in V$ are in $Y'$. By Remark \ref{rem:Ynotirred}, we also have one more irreducible component $\overline{Y_0}$, the unique irreducible component whose projection to $V$ is dense. Now consider the mapping $V\to V_+\oplus V_-$, $v\mapsto v\otimes 1=v_++v_-$. If $v_+\in\NN(V_+)$, then $v\in\NN(V)$ since $V\to V_+$ is a $G$-isomorphism. But $\NN(V)\cap\M=\{0\}$. Thus $Y'\cap\M=\{0\}$ is not dense in $Y'$ while $\M$ is dense in $\overline{Y_0}$.
 \end{example}

 \begin{example}\label{ex:Y0notreduced}
 We consider cases where $Y$ is not reduced.   Let $K\subset L$ be  compact Lie groups where $\dim K<\dim L$. Suppose that we have a unitary $L$-module $V$ such that $\OO(V)^G=\OO(V)^H$ where $G=K_\C$ and $H=L_\C$. Let $\mu_G$ (respectively $\mu_H$) be the moment mapping for $G$ (respectively $H$). Let $Y_G:=\mu_G\inv(0)$ and $Y_H:=\mu_H\inv(0)$. Then $Y_G$ is not reduced. One sees this as follows. Since $K\subset L$, the Kempf-Ness set $\M_L$ for the $H$-action is a subset of the Kempf-Ness set $\M_K$  of the $G$-action. But $\M_K/K\simeq\M_L/L\simeq \quot VG\simeq\quot VH$. Thus we must have that $\M_K=\M_L$. This says that  $\mu_H=0$ and $\mu_G=0$   give the same zero set when restricted to $V$. Hence the irreducible components of $Y_G$ in the Zariski closure of $\M_K$ are not reduced and so $Y_G$ is not reduced.
   
In  Example \ref{ex:sl2} above,  let $\overline{\SL}_2(\C)$ denote another copy of $\SL_2(\C)$ acting on a copy $\overline{\C}^2$ of $\C^2$. Consider the action of $\SO_4(\C)=(\SL_2(\C)\times\overline{\SL}_2(\C))/\pm I$ on $\C^2\otimes \overline{\C}^2$. Then the $\SO_4(\C)$-invariants are the same as the $\SL_2$-invariants, hence $\overline{Y_0}$ is not reduced.
  \end{example}
  
  \section{The symplectic slice theorem} \label{sec:sympslice} 
  
  We need to recall the symplectic slice theorem. It will allow us to apply our results above to Hamiltonian actions of compact groups on manifolds.
  
  Let $(M,\omega)$ be a Hamiltonian $K$-manifold, $K$ a compact Lie group, with moment map $\rho:M\to \mathfrak k^*$. Let $Z:=\rho^{-1}(0)$ be the zero fiber and let $z\in Z$. The symplectic slice at $z$ is defined as $V:=T_z(Kz)^\omega/(T_z(Kz)\cap T_z(Kz)^\omega)$. Here $T_z(K.z)^\omega$ denotes the perpendicular space relative to $\omega$. The subspace $V$ can be identified with a subspace of the slice representation at $z$ and the symplectic form $\omega_z$ restricted to $V$ is non-degenerate and $K_z$-invariant.  We have the canonical moment map $\rho_V$ on $V$ with image in $ \mathfrak k_z^*$. Choose a  $K_z$-equivariant splitting $\mathfrak k=\mathfrak k_z\oplus \mathfrak m$ so that $\liek^*=\liem^*\oplus\liek_z^*$. We set 
\[Y:=K\times^{K_z}(\mathfrak \liem^*\times V).\]
There exists a canonical $K$-invariant symplectic form on $Y$ and moment mapping $\rho_Y\colon Y\to\liek^*$.
\begin{theorem} [\cite{GS84,Marle85}]\label{thm:sympslice}
Let $(M,\omega)$, $z$, etc.\ be as above.
\begin{enumerate}
\item The action of $K$ on $Y$ is Hamiltonian with moment map:
\begin{eqnarray*}
\rho_Y: Y\to \mathfrak k^*,\quad \rho_Y(k,\lambda,v)=\operatorname{Ad}^*_{k\inv}(\lambda+ \rho_V(v)),
\end{eqnarray*}
where $\operatorname{Ad}^*_{k\inv}$ denotes the coadjoint action of $k\inv\in K$ on $\mathfrak k^*$.
\item There exists a $K$-invariant open neighborhood $U_1$ of $Kz$ in $M$, a  $K$-invariant open neighborhood $U_2$ of $[e,0]$ in $Y$ and  a $K$-equivariant symplectomorphism  $\varphi:U_1 \to U_2$.
\item The pull-back of $\rho_Y$ by $\varphi$ is $\rho$.
\end{enumerate}
\end{theorem}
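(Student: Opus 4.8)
The plan is to prove the Guillemin--Sternberg--Marle normal form by combining a $K_z$-equivariant linear-algebra decomposition of $T_zM$ with an equivariant Moser isotopy, matching moment maps along the way. First I would record the infinitesimal picture at $z$. Writing $X_A$ for the fundamental vector field of $A\in\liek$ and $\rho^A:=\langle\rho,A\rangle$, the moment map identity $\iota_{X_A}\omega=d\rho^A$ together with equivariance $\{\rho^A,\rho^B\}=\rho^{[A,B]}$ gives $\omega_z(X_A(z),X_B(z))=\langle\rho(z),[A,B]\rangle=0$ because $\rho(z)=0$. Hence $T_z(Kz)$ is isotropic, so $T_z(Kz)\cap T_z(Kz)^\omega=T_z(Kz)$ and $V=T_z(Kz)^\omega/T_z(Kz)$ is a symplectic $K_z$-module. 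With the $K_z$-splitting $\liek=\liek_z\oplus\liem$, the map $A\mapsto X_A(z)$ identifies $\liem$ with $T_z(Kz)$, yielding a $K_z$-equivariant symplectic decomposition $T_zM\cong\liem\oplus\liem^*\oplus V$, with $\liem\oplus\liem^*$ carrying the standard pairing. This matches $T_{[e,0]}Y\cong(\liek/\liek_z)\oplus\liem^*\oplus V$.

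Next I would analyze the model $Y=K\times^{K_z}(\liem^*\times V)$ directly, realizing it as a symplectic reduction (the minimal-coupling construction). Using the left trivialization $T^*K\cong K\times\liek^*$, the product $T^*K\times V$ carries the sum of the canonical form and $\omega_V$ and a Hamiltonian $K\times K_z$-action; reducing at $0$ for the right $K_z$-action identifies the zero level, via $\liem^*=\liek_z^\circ$, with $K\times^{K_z}(\liem^*\times V)=Y$ and endows it with a $K$-invariant symplectic form. Computing the residual $K$-moment map then gives $\rho_Y(k,\lambda,v)=\Ad^*_{k\inv}(\lambda+\rho_V(v))$, with $\lambda\in\liem^*$ and $\rho_V(v)\in\liek_z^*$, which is part (1).

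The heart of the argument is part (2): producing the $K$-equivariant symplectomorphism $\varphi$. I would invoke the equivariant tubular neighborhood theorem to obtain a $K$-diffeomorphism from a neighborhood of $Kz$ in $M$ onto a neighborhood of the zero section in $Y$ inducing the identification of $T_zM$ found above; this pulls $\omega$ back to a $K$-invariant closed form $\omega_0$ on $Y$ agreeing with the model form $\omega_Y$ along the zero section. The two forms then differ by $d$ of a $K$-invariant $1$-form $\alpha$ vanishing along $Kz$, and an equivariant relative Poincar\'e lemma together with the Moser trick (integrating the time-dependent equivariant field $X_t$ solving $\iota_{X_t}\omega_t=-\alpha$, where $\omega_t=\omega_Y+t(\omega_0-\omega_Y)$) produces $\varphi$ on a smaller invariant neighborhood. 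For part (3), once $\varphi$ is symplectic and $K$-equivariant, both $\varphi^*\rho_Y$ and $\rho$ are moment maps for the same action and form; since moment maps are unique up to a constant in $(\liek^*)^K$ and both vanish at $z$, they coincide.

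I expect the main obstacle to be the equivariant Moser step: one must keep the interpolating forms $\omega_t$ nondegenerate on a uniform invariant neighborhood and choose the primitive $\alpha$ both $K$-invariant and vanishing to sufficient order along the \emph{whole} orbit $Kz$, not merely at $z$, so that the Moser flow exists for all $t\in[0,1]$ and fixes $Kz$. Maintaining strict $K$-equivariance throughout, and then verifying that the moment maps are genuinely matched rather than equal only up to an invariant constant, is where the real care is required.
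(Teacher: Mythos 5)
The paper offers no proof of this theorem: it is stated as a quotation from \cite{GS84,Marle85}, so there is no internal argument to compare your proposal against. Your outline is precisely the standard proof from those sources---isotropy of $T_z(Kz)$ at a point of the zero level, the $K_z$-equivariant symplectic splitting $T_zM\cong\liem\oplus\liem^*\oplus V$, the minimal-coupling realization of $Y$ as the reduction of $T^*K\times V$ by $K_z$ (which yields the stated formula for $\rho_Y$), the equivariant Darboux--Weinstein/Moser step for part (2), and uniqueness of moment maps for part (3)---and it is correct in all essentials. The uniformity issue you flag in the Moser step is resolved by compactness of the orbit $Kz$ (nondegeneracy of $\omega_t$ for all $t\in[0,1]$ then holds on a uniform invariant neighborhood, and the Moser field vanishes along $Kz$ since the invariant primitive $\alpha$ can be taken to vanish pointwise there), while the possible invariant constant in part (3) dies because, after shrinking, every component of the invariant neighborhood meets $Kz$, the difference $\varphi^*\rho_Y-\rho$ is locally constant and $K$-equivariant, and it vanishes at $z$.
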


\begin{remark}\label{rem:hermitian}
Let $V$ be as above. Since $\omega_z$ is non-degenerate and $K_z$-invariant on $V$, there is a $K_z$-invariant hermitian form $\langle\,,\rangle$ on $V$ whose associated $2$-form (the imaginary part of $\langle\,,\rangle)$ is $\omega_z$. Note that the moment map  $\rho_V$ is defined by $\rho_V(v)(A)= (1/2)\omega_z(Av,v)$, $v\in V$, $A\in\liek_v$  which is the same %
as $(-i/2)\langle A(v),v\rangle$, so the canonical moment mapping on $V$ is the one that we have been using.

The hermitian structure on $V$ gives us a complex structure on $V$ and we  have an action of $G_z=(K_z)_\C$ on $V$.  
The isomorphism class of the representation of $G_z$ is uniquely determined by the $K_z$-action. Thus  it makes sense to ask if $(V,G_z)$ is $1$-large, etc. The answer is independent of the complex structure that we choose.
\end{remark}

 \section{Actions on manifolds}\label{sec:manifolds}
 
 \begin{theorem} Let $K$ be a compact Lie group acting in a Hamiltonian way on a symplectic manifold  $M$, and let $\rho:M\to \mathfrak k^*$ be a moment map for the action.
 Then the Koszul complex $\mathcal K(\rho,\mathcal C^\infty(M))$
 is a resolution of the ring $\mathcal C^\infty(Z)$ of smooth functions on $Z=\rho^{-1}(0)$ if and only if for every $z\in Z$ the complexification of the symplectic slice representation at $z$  is $1$-large.
\end{theorem}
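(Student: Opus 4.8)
The plan is to reduce the manifold statement to the linear result of \S\ref{sec:unitarycase} by means of the symplectic slice theorem. First I would observe that both conditions packaged in ``$\mathcal{K}(\rho,\mathcal{C}^\infty(M))$ is a resolution of $\mathcal{C}^\infty(Z)$'' — acyclicity in positive degrees and $H_0=\mathcal{C}^\infty(Z)$ — are local on $M$ and may be checked on germs: the sheaf $\mathcal{C}^\infty$ is fine, so global exactness and the equality of the ideal generated by the $\rho_i$ with the vanishing ideal of $Z$ are detected stalkwise (via partitions of unity). Away from $Z$ some $\rho_i$ is a unit and the complex is trivially a resolution, so it suffices to work near each $z\in Z$. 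By Theorem \ref{thm:sympslice} the germ of $(M,\rho)$ along $Kz$ is identified through $\varphi$ with the germ of $(Y,\rho_Y)$ at $[e,0,0]$, where $Y=K\times^{K_z}(\liem^*\times V)$ and $\varphi^*\rho_Y=\rho$; under this identification $Z$ corresponds to $K\times^{K_z}(\{0\}\times\M_z)$, with $\M_z=\rho_V\inv(0)$ the Kempf--Ness set of the slice representation $(V,G_z)$. Everything here is real-analytic, so the germ computations can be made analytically and then transported to the smooth setting by the faithful flatness of analytic germs in smooth germs, exactly as in \S\ref{sec:unitarycase}.

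The technical heart is a local factorization of $\mathcal{K}(\rho_Y)$. In coordinates $(\xi,\lambda,v)\in\liem\times\liem^*\times V$ near $[e,0,0]$ one has $\rho_Y(\xi,\lambda,v)=\Ad^*_{\exp(-\xi)}(\lambda+\rho_V(v))$. I would apply to the tuple of generators the fibrewise linear automorphism $\big(\Ad^*_{\exp(-\xi)}\big)\inv$ of $\liek^*\otimes\mathcal{C}^\infty(Y)$; this is an invertible matrix of smooth functions, hence yields an isomorphic Koszul complex, now built on the components of $\lambda+\rho_V(v)$. Pairing against a basis of $\liek=\liek_z\oplus\liem$ and using $\liem^*=\Ann(\liek_z)$ and $\liek_z^*=\Ann(\liem)$, these components split into the $\dim\liem$ coordinate functions of $\lambda$, which form a regular sequence and involve only $\lambda$, and the $\dim\liek_z$ components of $\rho_V$, which involve only $v$; the variable $\xi$ does not appear at all.

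Next I would split off the regular sequence. Since the two blocks involve disjoint variables, the Koszul complex of their union is the tensor product of the two Koszul complexes, and the $\lambda$-block, being a regular sequence, may be cancelled: at each germ the homology equals that of $\mathcal{K}(\rho_V)$ computed over the germ ring of $\liem\times V$. As $\rho_V$ is independent of $\xi$, the factor $\liem$ is an inert extra variable — formally, pullback along the projection $\liem\times V\to V$, which is flat for smooth germs in the sense used in \S\ref{sec:unitarycase} — so it does not affect the homology beyond replacing $\M_z$ by $\liem\times\M_z$. Thus, near $z$, the complex $\mathcal{K}(\rho,\mathcal{C}^\infty(M))$ is a resolution of $\mathcal{C}^\infty(Z)$ if and only if $\mathcal{K}(\rho_V,\mathcal{C}^\infty(V))$ is a resolution of $\mathcal{C}^\infty(\M_z)$.

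Finally I would invoke \S\ref{sec:unitarycase}. For the ``if'' direction, $1$-largeness of $(V,G_z)$ gives, by Corollary \ref{cor:koszul}, that $\mathcal{K}(\rho_V,\mathcal{C}^\infty(V))$ is a resolution; covering $Z$ by such orbit-neighborhoods and using locality yields a global resolution. For the ``only if'' direction, a resolution near $Kz$ forces $\mathcal{K}(\rho_V,\mathcal{C}^\infty(V))$ to be a resolution near $0\in V$; since $\rho_V$ is homogeneous and $\M_z$ is a cone, the regular-sequence property of the $\rho_{V,i}$ and their generation of the ideal of $\M_z$ are graded conditions detected at the origin, hence hold on all of $V$, so Proposition \ref{prop:converse} applies and $(V,G_z)$ is $1$-large. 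The step I expect to be the main obstacle is precisely the factorization above: untwisting the coadjoint action to decouple the normal and group directions from the slice, and then justifying — over smooth rather than polynomial functions — that the $\lambda$-block splits off and the $\xi$-direction is inert. The second delicate point is the globalization in the converse, passing from the germ at the origin to all of $V$ via homogeneity so that the hypotheses of Proposition \ref{prop:converse} are met.
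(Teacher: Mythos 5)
Your proposal is correct and takes essentially the same route as the paper: reduce to the local model $Y=K\times^{K_z}(\liem^*\times V)$ via Theorem \ref{thm:sympslice}, identify the zero set with $K\times^{K_z}\rho_V\inv(0)$, and conclude from Corollary \ref{cor:koszul} and Proposition \ref{prop:converse}. The steps you single out as the technical heart --- locality, untwisting the coadjoint action to split off the $\lambda$- and $\xi$-directions, and using homogeneity of $\rho_V$ to pass from the germ at $0$ to all of $V$ before invoking Proposition \ref{prop:converse} --- are precisely the details the paper's three-sentence proof leaves implicit, so your write-up is a faithful (and more complete) version of the same argument.
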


\begin{proof}
Let $z\in Z$. By the formula in Theorem \ref{thm:sympslice} we only have to prove the result for $Y:=K\times^{K_z}(\liem^*\times V)$  with the moment map  $\rho_Y$ given there. %in Theorem \ref{thm:sympslice}.  
Now the zero set of $\rho_Y$ is $K\times^{K_z}\rho_V\inv(0)$. Thus the theorem follows from Corollary \ref{cor:koszul} and Proposition \ref{prop:converse} (see Remark \ref{rem:hermitian}).
\end{proof}

%\bibliographystyle{amsplain}
%\bibliography{HS}

\begin{thebibliography}{10}

\bibitem{AGJ90}
Judith~M. Arms, Mark~J. Gotay, and George Jennings, \emph{Geometric and
  algebraic reduction for singular momentum maps}, Adv. Math. \textbf{79}
  (1990), no.~1, 43--103.

\bibitem{Avramov}
Luchezar~L. Avramov, \emph{Complete intersections and symmetric algebras}, J.
  Algebra \textbf{73} (1981), no.~1, 248--263.

\bibitem{BHP07}
Martin Bordemann, Hans-Christian Herbig, and Markus~J. Pflaum, \emph{A
  homological approach to singular reduction in deformation quantization},
  Singularity theory, World Sci. Publ., Hackensack, NJ, 2007, pp.~443--461.

\bibitem{GS84}
Victor Guillemin and Shlomo Sternberg, \emph{A normal form for the moment map},
  Differential geometric methods in mathematical physics ({J}erusalem, 1982),
  Math. Phys. Stud., vol.~6, Reidel, Dordrecht, 1984, pp.~161--175.

\bibitem{HIP09}
Hans-Christian Herbig, Srikanth~B. Iyengar, and Markus~J. Pflaum, \emph{On the
  existence of star products on quotient spaces of linear {H}amiltonian torus
  actions}, Lett. Math. Phys. \textbf{89} (2009), no.~2, 101--113.

\bibitem{KPV}
Victor~G. Kac, Vladimir~L. Popov, and Ernest~B. Vinberg, \emph{Sur les groupes
  lin\'eaires alg\'ebriques dont l'alg\`ebre des invariants est libre}, C. R.
  Acad. Sci. Paris S\'er. A-B \textbf{283} (1976), no.~12, Ai, A875--A878.

\bibitem{Malgrange}
B.~Malgrange, \emph{Ideals of Differentiable Functions}, Tata Institute of
  Fundamental Research Studies in Mathematics, No. 3, Tata Institute of
  Fundamental Research, Bombay, 1967.

\bibitem{Marle85}
Charles-Michel Marle, \emph{Mod\`ele d'action hamiltonienne d'un groupe de
  {L}ie sur une vari\'et\'e symplectique}, Rend. Sem. Mat. Univ. Politec.
  Torino \textbf{43} (1985), no.~2, 227--251 (1986).

\bibitem{Panyushev}
Dmitrii~I. Panyushev, \emph{The {J}acobian modules of a representation of a
  {L}ie algebra and geometry of commuting varieties}, Compositio Math.
  \textbf{94} (1994), no.~2, 181--199.

\bibitem{Popov}
V.~L. Popov, \emph{Criteria for the stability of the action of a semisimple
  group on the factorial of a manifold}, Izv. Akad. Nauk SSSR Ser. Mat.
  \textbf{34} (1970), 523--531.

\bibitem{JAlg94}
Gerald~W. Schwarz, \emph{Differential operators on quotients of simple groups},
  J. Algebra \textbf{169} (1994), no.~1, 248--273.

\bibitem{LiftingDO}
\bysame, \emph{Lifting differential operators from orbit spaces}, Ann. Sci.
  \'Ecole Norm. Sup. (4) \textbf{28} (1995), no.~3, 253--305.

\bibitem{Stasheff92}
Jim Stasheff, \emph{Homological (ghost) approach to constrained {H}amiltonian
  systems}, Mathematical aspects of classical field theory ({S}eattle, {WA},
  1991), Contemp. Math., vol. 132, Amer. Math. Soc., Providence, RI, 1992,
  pp.~595--609.

\bibitem{Tougeron}
Jean-Claude Tougeron, \emph{Id\'eaux de fonctions diff\'erentiables},
  Springer-Verlag, Berlin, 1972, Ergebnisse der Mathematik und ihrer
  Grenzgebiete, Band 71.

\bibitem{Vinberg}
{\`E}.~B. Vinberg, \emph{Complexity of actions of reductive groups},
  Funktsional. Anal. i Prilozhen. \textbf{20} (1986), no.~1, 1--13, 96.

\bibitem{WehlauPop}
David~L. Wehlau, \emph{A proof of the {P}opov conjecture for tori}, Proc. Amer.
  Math. Soc. \textbf{114} (1992), no.~3, 839--845.

\bibitem{WehlauCons}
\bysame, \emph{Constructive invariant theory for tori}, Ann. Inst. Fourier
  (Grenoble) \textbf{43} (1993), no.~4, 1055--1066.

\end{thebibliography}
\def\cprime{$'$}
\providecommand{\bysame}{\leavevmode\hbox to3em{\hrulefill}\thinspace}
\providecommand{\MR}{\relax\ifhmode\unskip\space\fi MR }
% \MRhref is called by the amsart/book/proc definition of \MR.
\providecommand{\MRhref}[2]{%
  \href{http://www.ams.org/mathscinet-getitem?mr=#1}{#2}
}
\providecommand{\href}[2]{#2}

\end{document}